\newtheorem{theorem}{Theorem}[section]
\newtheorem{corollary}[theorem]{Corollary}
\newtheorem{lemma}{Lemma}[section]
\newtheorem{proposition}{Proposition}[section]
\theoremstyle{remark}
\newtheorem{remark}{Remark}[section]
\newtheorem{definition}{Definition}[section]
\newcommand{\De}{\Delta}
\newcommand{\Z}{{\mathbb Z}}
\newcommand{\N}{{\mathbb N}}
\newcommand{\C}{{\mathbb C}}
\newcommand{\R}{{\mathbb R}}
\newcommand{\cH}{{\mathcal H}}
\newcommand{\cL}{{\mathcal L}}
\newcommand{\cZ}{{\mathcal Z}}
\newcommand{\measure}{\omega}
\DeclareMathOperator{\spn}{span}
\begin{document}

\title[Orthogonality of super-Jack polynomials]{Orthogonality of super-Jack polynomials and a Hilbert space interpretation of deformed Calogero-Moser-Sutherland operators}

\author{Farrokh Atai}
\address{Department of Mathematics, Kobe University, Rokko, Kobe 657-8501, Japan}
\email{farrokh@math.kobe-u.ac.jp}

\author{Martin Halln\"as}
\address{Department of Mathematical Sciences, Chalmers University of Technology and the University of Gothenburg, SE-412 96 Gothenburg, Sweden}
\email{hallnas@chalmers.se}

\author{Edwin Langmann}
\address{Department of Physics, KTH Royal Institute of Technology, SE-106 91 Stockholm, Sweden}
\email{langmann@kth.se}

\date{\today}

\begin{abstract}
We prove orthogonality and compute explicitly the (quadratic) norms for super-Jack polynomials $SP_\lambda((z_1,\ldots,z_n),(w_1,\ldots,w_m);\theta)$ with respect to a natural positive semi-definite, but degenerate, Hermitian product $\langle\cdot,\cdot\rangle_{n,m}^\prime$. In case $m=0$ (or $n=0$), our product reduces to Macdonald's well-known inner product $\langle\cdot,\cdot\rangle_n^\prime$, and we recover his corresponding orthogonality results for the Jack polynomials $P_\lambda((z_1,\ldots,z_n);\theta)$. From our main results, we readily infer that the kernel of $\langle\cdot,\cdot\rangle_{n,m}^\prime$ is spanned by the super-Jack polynomials indexed by a partition $\lambda$ not containing the $m\times n$ rectangle $(m^n)$. As an application, we provide a Hilbert space interpretation of the deformed trigonometric Calogero-Moser-Sutherland operators of type $A(n-1,m-1)$.
\end{abstract}

\maketitle

\section{Introduction}\label{sec1}
The celebrated Jack polynomials $P_\lambda((z_1,\ldots,z_n);\theta)$, depending on a partition $\lambda$ and a parameter $\theta$, were introduced by Henry Jack \cite{Jac70} to interpolate between Schur polynomials ($\theta=1$) and zonal polynomials ($\theta=1/2$). (Throughout the paper, we use the inverse $\theta=1/\alpha$ of the standard parameter $\alpha$ for the Jack polynomials.) Many of the properties of Schur- and zonal functions were subsequently generalised to the Jack case by Macdonald \cite{Mac87,Mac95}, Stanley \cite{Sta89} and others; and important applications of the Jack polynomials can, by now, be found in areas such as (quantum) integrable systems of Calogero-Moser-Sutherland (CMS) type, $\beta$-ensembles of random matrix theory, Hilbert schemes of points in algebraic geometry, the Alday-Gaiotto-Tachikawa (AGT) correspondence in quantum field theory, and many more; see e.g.~\cite{vDV00,For10,Nak16,MS14} and references therein. 

In this paper, we focus on a particular generalisation of the Jack polynomials, namely the super-Jack polynomials $SP_\lambda((z_1,\ldots,z_n),(w_1,\ldots,w_m);\theta)$, which were introduced by Kerov, Okounkov and Olshanski \cite{KOO98} in the context of discrete potential theory on the so-called Young graph. Since then, the super-Jack polynomials have appeared in constructions of (formal) eigenfunctions of deformed CMS operators \cite{Ser00,Ser02,SV05,HL10,DH12}, in studies of circular $\beta$-ensembles of random matrices \cite{Oko97,Mat08,DL15}, as well as a conformal field theory description of the fractional quantum Hall effect \cite{AL17}.

The algebraic- and combinatorial properties of super-Jack polynomials have been studied in great detail, but, in stark contrast to the Jack polynomials, an interpretation as orthogonal polynomials has been missing. Our main result is a natural generalisation of Macdonald's well-known inner product $\langle\cdot,\cdot\rangle_n^\prime$, and corresponding orthogonality relations and (quadratic) norms formulae for the Jack polynomials, to the super case. As a particular application of these results, we are able to provide a Hilbert space interpretation of deformed trigonometric CMS operators of type $A(n-1,m-1)$, something that, due to singularities of the relevant eigenfunctions, up until now has proved to be elusive.

From work of Macdonald \cite{Mac87,Mac95}, it is well-known that the Jack polynomials $P_\lambda(z;\theta)$, $z= (z_1,\ldots,z_n)$, form an orthogonal system on the $n$-dimensional torus $T^n= T^n_1$, where
\begin{equation}\label{tori}
T_\xi^n = \{z\in\C^n : |z_j|=\xi,\, j=1,\ldots,n\},\ \ \ \xi>0,
\end{equation}
with respect to the weight function
\begin{equation}\label{Deln}
\Delta_n(z;\theta) = \prod_{1\leq j<k\leq n}\big[(1-z_j/z_k)(1-z_k/z_j)\big]^\theta, 
\end{equation}
and remarkably simply explicit formulae for the corresponding norms of the Jack polynomials are known; see \eqref{Porth}--\eqref{Nlam}. In this paper, we prove that the super-Jack polynomials $SP_\lambda(z,w;\theta)$, $z=(z_1,\ldots,z_n)$ and $w=(w_1,\ldots,w_m)$,  are pair-wise orthogonal with respect to a sesquilinear product with weight function
\begin{equation}\label{Delnm}
\Delta_{n,m}(z,w;\theta) = \frac{\De_n( z;\theta)\De_m(w;1/\theta)}{\prod_{j=1}^n\prod_{k=1}^m(1-z_j/w_k)(1-w_k/z_j)}, 
\end{equation}
and we explicitly compute the corresponding (quadratic) norms; see Theorem \ref{Thm:orth}. To avoid the poles along $z_j=w_k$, we integrate the variables $z$ and $w$ over tori $T_\xi^n$ resp.~$T_{\xi^\prime}^m$ with $\xi\neq \xi^\prime$. Due (only) to the denominator in \eqref{Delnm}, we are thus working with a complex-valued weight function (``only'' since $\Delta_n(z;\theta)\geq 0$ on $T_\xi^n$). Nevertheless, we show that the norms in question are always non-negative. To be more specific, it is known that  the super-Jack polynomials vanish identically unless the partition $\lambda$ satisfies the fat-hook condition \cite{SV05}
\begin{equation}\label{fathook}
\lambda_{n+1}\leq m,
\end{equation}
and while the norms we obtain are non-negative for all these partitions, we find that they are positive if and only if
\begin{equation}\label{box}
\lambda_{n+1}\leq m\leq \lambda_n . 
\end{equation} 
Note that this condition is equivalent to adding $(m^n)\subseteq\lambda$ to the former condition \eqref{fathook}. As discussed below, the requirement \eqref{box} has a natural interpretation in a recently proposed physics application of the super-Jack polynomials \cite{AL17}. 

Sergeev and Veselov \cite{SV05} proved that the super-Jack polynomials $SP_\lambda( z,w ;\theta)$ form a linear basis in the subalgebra
\begin{equation}\label{Lam}
\Lambda_{n,m,\theta}\subset\C[z,w]^{S_n\times S_m}
\end{equation}
consisting of all polynomials $p(z,w)$ that are $S_n$-invariant in $z$, $S_m$-invariant in $w$,  and furthermore satisfy the quasi-invariance condition
\begin{equation}\label{qinv}
\left. \left(\frac{\partial}{\partial z_j} + \theta\frac{\partial}{\partial w_k}\right)p(z,w)\right|_{z_j=w_k}\equiv 0
\end{equation}
for all $j=1,\ldots,n$ and $k=1,\ldots,m$. 
From our orthogonality results and norms formulae, it readily follows that these invariance properties are sufficient to ensure that the sesquilinear product in question is both Hermitian and independent of the choice of $\xi, \xi^\prime>0$ as long as $\xi\neq\xi^\prime$. Moreover, as indicated above, we find that the product is positive semidefinite, and that its kernel is  spanned by the super-Jack polynomials $SP_\lambda(z,w;\theta)$ for which $\lambda$ does not contain the $m\times n$ rectangle $(m^n)$. Consequently, it descends to a positive definite inner product on the factor space
\begin{equation*} 
V_{n,m,\theta}:= \Lambda_{n,m,\theta}\big/\spn\{ SP_\lambda(z,w;\theta) : (m^n)\not\subseteq\lambda \}.
\end{equation*} 
(Here and below, we write $\spn S$ for the $\C$-linear span of a set $S$.) 

Our main motivation for studying the orthogonality properties of super-Jack polynomials is due to their close connection with the deformed CMS operator
\begin{equation}
\label{dCMSOp}
\begin{split}
\cH_{n,m,\theta} &= -\sum_{j=1}^n\frac{\partial^2}{\partial x_j^2} + \sum_{1\leq j<k\leq n}\frac{\theta(\theta-1)}{2\sin^2\frac{1}{2}(x_j-x_k)}\\
&+ \theta\sum_{k=1}^m\frac{\partial^2}{\partial y_k^2} - \sum_{1\leq j<k\leq m}\frac{(\theta^{-1}-1)}{2\sin^2\frac{1}{2}(y_j-y_k)} + \sum_{j=1}^n\sum_{k=1}^m\frac{(1-\theta)}{2\sin^2\frac{1}{2}(x_j-y_k)}.
\end{split}
\end{equation}
The $m=1$ instance of this operator was introduced, and shown to be integrable, in 1998 by Chalykh, Feigin and Veselov \cite{CFV98}. Shortly thereafter, Sergeev \cite{Ser00,Ser02} wrote down the operator for all $m>1$ and related it to the root system of the Lie superalgebra $\mathfrak{gl}(n|m)$. Furthermore, he showed that $\cH_{n,m,\theta}$ has (formal) eigenfunctions of the form
\begin{equation}
\label{Psi}
\Psi_\lambda(x,y;\theta) = \Psi_0(x,y;\theta)SP_\lambda\big(z(x),w(y);\theta\big)
\end{equation}
with
$$
z(x) = (e^{ix_1},\ldots,e^{ix_n}),\ \ \ w(y) = (e^{iy_1},\ldots,e^{iy_m})
$$
and
$$
\Psi_0(x,y;\theta) = \frac{\left(\prod_{1\leq j<k\leq n}\sin^2\frac{1}{2}(x_j-x_k)\right)^{\theta/2}\left(\prod_{1\leq j<k\leq m}\sin^2\frac{1}{2}(y_j-y_k)\right)^{1/2\theta}}{\prod_{j=1}^n\prod_{k=1}^m\sin\frac{1}{2}(x_j-y_k)}.
$$
Sergeev and Veselov proved in \cite{SV04} the integrability of $\cH_{n,m,\theta}$ for $m>1$ by an explicit recursive construction of higher order quantum integrals, and in their subsequent paper \cite{SV05}, which contains a more conceptual proof of integrability, they demonstrated that the functions $\Psi_\lambda$ are joint eigenfunctions of all quantum integrals.

From the point of view of (functional) analysis and quantum physics, these results were difficult to interpret: Regardless of the value of $\theta$, the eigenfunctions $\Psi_\lambda(x,y;\theta)$ are not contained in the Hilbert space $L^2([0,2\pi]^n\times [0,2\pi]^m)$ unless $n=0$ or $m=0$, and when attempting to interpret $\cH_{n,m,\theta}$ as the Schr\"odinger operator of a quantum many-body system one encounters the problem that $m$ particles have negative mass as long as $\theta>0$.

Our results provide a solution to the former problem: From our inner product on $V_{n,m,\theta}$, we obtain a natural regularisation of the ill-defined $L^2$-products of the eigenfunctions $\Psi_\lambda$, which is independent of the specific choice of regularisation parameters. As a consequence, we are able to promote the deformed CMS operator \eqref{dCMSOp}, as well as each of the higher order quantum integrals, to a self-adjoint operator in a Hilbert space, corresponding to the completion of $V_{n,m,\theta}$, with an orthogonal basis given by the eigenfunctions $\Psi_\lambda$ with $\lambda$ satisfying \eqref{box}.

A possible resolution of the latter problem, i.e.~that of negative masses, was recently proposed by two of us in \cite{AL17}. As shown there, the deformed CMS operator \eqref{dCMSOp} provides an effective description of a conformal field theory (CFT) relevant for the fractional quantum Hall effect in a subspace corresponding to $n$ particles and $m$ holes. States in this subspace can be characterised by the particle- and hole momenta, which are given by partitions $\mu=(\mu_1,\ldots,\mu_n)$ and $\nu=(\nu_1,\ldots,\nu_m)$ of lengths $\leq n$ and $\leq m$, respectively. Moreover, such a state can be naturally associated with the eigenfunction $\Psi_\lambda(x,y;\theta)$ \eqref{Psi} with
\begin{equation} 
\label{lambdamunu} 
\lambda=(\mu+(m^n),\nu'). 
\end{equation}

Note that the partitions of this form are precisely those satisfying the condition \eqref{box}. In \cite{AL17} super-Jack polynomials were used to construct CFT states, and the polynomials corresponding to a partition $\lambda$ satisfying \eqref{fathook} but not \eqref{box} turned out to be superfluous. However, this redundancy of states was somewhat puzzling; see \cite{AL17}, end of Section IV.B. Our result that the norms of the super-Jack polynomials in question are zero, in combination with results from \cite{AL17}, entails that these superfluous states are zero, which nicely resolves this puzzle.

Our plan for the rest of this paper is as follows. In Section~\ref{sec2}, we review definitions and results relating to Jack- and super-Jack polynomials that we make use of in the paper. Section~\ref{sec3} is the heart of the paper, in which we formulate and prove the orthogonality relations and norms formulae for the super-Jack polynomials. As an application of our main results, we formulate a Hilbert space interpretation of the deformed CMS operator \eqref{dCMSOp} in Section \ref{sec4}. Section~\ref{sec5} contains final remarks and an outlook.

\section{Jack and super-Jack polynomials}\label{sec2}
In this section, we briefly review features of Jack and super-Jack polynomials we make use of. We follow to a large extent the notation in Macdonald's book \cite{Mac95}, with a notable exception being our use of the inverse
$$
\theta=1/\alpha
$$
of Macdonald's parameter $\alpha$.

\subsection{Symmetric functions and Jack polynomials}
We start with the set of partitions $\mathscr{P}$, consisting of the (finite or infinite) sequences
$$
\lambda = (\lambda_1,\lambda_2,\ldots,\lambda_j,\ldots),\ \ \ \lambda_j\in\Z_{\geq 0},
$$
with
$$
\lambda_1\geq\lambda_2\geq\cdots\geq\lambda_j\geq\cdots
$$
and only finitely many $\lambda_j$ non-zero. (We shall not distinguish between partitions differing only by a string of zeros.) The number of non-zero terms (or parts) $\lambda_j$, denoted $\ell(\lambda)$, is called the length of $\lambda$, and if $|\lambda|:=\lambda_1+\lambda_2+\cdots=k$ one says that $\lambda$ is a partition of $k$. The set consisting of all partitions of $k\in\Z_{\geq 0}$ is denoted $\mathscr{P}_k$. Considering the diagram of a partition $\lambda$, reflection in the diagonal yields the conjugate $\lambda^\prime$ of $\lambda$.

For our purposes, it will be important to work over the complex numbers. Therefore, we let
$$
\Lambda_n = \C[z_1,\ldots,z_n]^{S_n}
$$
denote the algebra of complex symmetric polynomials in $n$ independent variables. Its (homogeneous) bases are naturally labelled by the partitions $\lambda=(\lambda_1,\lambda_2,\ldots,\lambda_n)$ of length $\leq n$. One of the many important such bases consists of the monomial symmetric polynomials
$$
m_\lambda(z_1,\ldots,z_n) = \sum_{a\in S_n(\lambda)}z_1^{a_1}\cdots z_n^{a_n},
$$
where $S_n(\lambda)$ denotes the $S_n$-orbit of $\lambda$. Note that $m_\lambda$ is homogeneous of degree $|\lambda|$. For any $k\in \Z_{\geq 0}$, the set $\{m_\lambda\}_{|\lambda|=k}$, which is a linear basis in the homogeneous component of $\Lambda_n$ of degree $k$, inherits a partial order from the dominance partial order on $\mathscr{P}_k$, defined by
\begin{equation}\label{domOrd}
\mu\leq\lambda\Leftrightarrow \mu_1+\cdots+\mu_j\leq \lambda_1+\cdots+\lambda_j,\, 
\quad \forall j\geq 1.
\end{equation}

We turn now to the symmetric polynomials introduced by Jack \cite{Jac70}. To begin with, we describe one of their definitions due to Macdonald \cite{Mac87,Mac95}, which is closely related to our main result. Recalling the weight function $\Delta_n$ \eqref{Deln}, we consider the inner product $\langle\cdot,\cdot\rangle^\prime_{n,\theta}$ on $\Lambda_n$ defined by
\begin{equation}\label{MacProd}
\langle p,q\rangle^\prime_{n,\theta} = \frac{1}{n!}\int_{T^n}p(z)\overline{q(z)}\Delta_n(z;\theta)d\measure_n(z),
\end{equation}
where $T^n=T^n_1$ (c.f.~\eqref{tori}) and
$$
{d\measure_n(z)} := \frac{1}{(2\pi i)^n}\frac{dz_1}{z_1}\cdots \frac{dz_n}{z_n}.
$$
Given a partition $\lambda=(\lambda_1,\ldots,\lambda_n)$, the (monic) Jack polynomial $P_\lambda(z;\theta)$ can be uniquely characterised by the following two properties:
\begin{enumerate}
\item $P_\lambda=m_\lambda+\sum_{\mu<\lambda}u_{\lambda\mu}m_\mu$,
\item $\langle P_\lambda,m_\mu\rangle^\prime_{n,\theta}=0$ for all $\mu<\lambda$.
\end{enumerate}
From these properties, it is clear that $\langle P_\lambda,P_\mu\rangle^\prime_{n,\theta}=0$ whenever $\mu<\lambda$ or $\mu>\lambda$. Since the dominance order \eqref{domOrd} is only a partial order, it is a remarkable fact that pair-wise orthogonality holds true not only in these cases but for all $\mu\neq\lambda$, namely
\begin{equation}\label{Porth}
\langle P_\lambda,P_\mu\rangle^\prime_{n,\theta} = \delta_{\lambda\mu}N_n(\lambda;\theta)
\end{equation}
with (quadratic) norms
\begin{equation}\label{Nlam}
N_n(\lambda;\theta) = \prod_{1\leq j<k\leq n}\frac{\Gamma(\lambda_j-\lambda_k+\theta(k-j+1))\Gamma(\lambda_j-\lambda_k+\theta(k-j-1)+1)}{\Gamma(\lambda_j-\lambda_k+\theta(k-j))\Gamma(\lambda_j-\lambda_k+\theta(k-j)+1)},
\end{equation}
(where the Kronecker delta $\delta_{\lambda\mu}$ equals $1$ if $\lambda=\mu$ and $0$ otherwise,  and $\Gamma$ is Euler's Gamma function). Before proceeding further, we detail an alternative expression for $\langle p,q\rangle^\prime_{n,\theta}$, which we shall make use of in Section \ref{sec3}. We observe that \eqref{MacProd} can be rewritten as
\begin{equation}\label{MacProdAlt}
\langle p,q\rangle^\prime_{n,\theta} = \frac{1}{n!}\int_{T^n_\xi}p(z)\overline{q(\bar{z}^{-1})}\Delta_n(z;\theta)d\measure_n(z),\ \ \ \forall \xi>0.
\end{equation}
To see this, note that
\begin{equation}\label{scaleInv}
\Delta_n(\xi z) = \Delta_n(z),\ \ \ d\measure_n(\xi z) = d\measure_n(z),\ \ \ \xi>0,
\end{equation}
$$
\overline{q(z)} = \overline{q(\bar{z}^{-1})},\ \ \ z\in T^n
$$
where $\bar{z}^{-1}:=(1/\bar{z}_1,\ldots,1/\bar{z}_n)$, and that homogeneous components of $\Lambda_n$ of different degrees are orthogonal.

A prominent feature of the Jack polynomials is their stability:
$$
P_\lambda(z_1,\ldots,z_{n-1},0) = P_\lambda(z_1,\ldots,z_{n-1}),
$$
where $P_\lambda(z_1,\ldots,z_{n-1})\equiv 0$ if $\ell(\lambda)=n$. Hence, for each partition $\lambda$, there is a well-defined element $P_\lambda(\theta)$ of the algebra
$$
\Lambda = \varprojlim_n\, \Lambda_n
$$
of complex symmetric functions. (The inverse limit should be taken in the category of graded algebras relative to the restriction homomorphisms $\rho_n:\Lambda_n\to \Lambda_{n-1}$ sending $p(z_1,\ldots,z_n)$ to $p(z_1,\ldots,z_{n-1},0)$.) The $P_\lambda(\theta)$ are called Jack's symmetric functions and form a linear basis in $\Lambda$.

As indicated in the introduction, the definition of the super-Jack polynomials relies on the fact that $\Lambda$ is freely generated by the power sums
\begin{equation} 
\label{FormalPowerSums}
p_r = \sum_j z_j^r,\ \ \ r\geq 1.
\end{equation} 
Defining the automorphism $\omega_\theta:\Lambda\to\Lambda$ by
$$
\omega_\theta(p_r) = (-1)^{r-1}\theta p_r,\ \ \ r\geq 1,
$$
we recall the duality relation
\begin{equation}\label{PQ}
\omega_{1/\theta}\big(P_\lambda(\theta)\big) = Q_{\lambda^\prime}(1/\theta),
\end{equation}
where
\begin{equation}\label{Qb}
Q_\lambda(\theta) = b_\lambda(\theta)P_\lambda(\theta),\ \ \ b_\lambda(\theta) = \prod_{(j,k)\in\lambda}\frac{\lambda_j-k+\theta(\lambda^\prime_k-j+1)}{\lambda_j-k+1+\theta(\lambda^\prime_k-j)}.
\end{equation}
For later reference, we note that
\begin{equation}\label{bDual}
b_{\lambda^\prime}(1/\theta) = 1/b_\lambda(\theta),
\end{equation}
which entails $\omega_{1/\theta}=\omega_\theta^{-1}$.

Considering the product expansion
\begin{equation}\label{Pprod}
P_\mu(\theta) P_\nu(\theta) = \sum_\lambda f^\lambda_{\mu\nu}(\theta)P_\lambda(\theta),
\end{equation}
we recall from \cite{Sta89} and Sections VI.7 and VI.10 in \cite{Mac95} that the coefficients $f^\lambda_{\mu\nu}$ have, in particular, the following properties:
\begin{enumerate}
\item $f^{\mu+\nu}_{\mu\nu}=1$,
\item $f^\lambda_{\mu\nu}(\theta) = f^{\lambda^\prime}_{\mu^\prime\nu^\prime}(1/\theta) b_\lambda(\theta)/b_\mu(\theta)b_\nu(\theta)$,
\item $f^\lambda_{\mu\nu}=0$ unless $|\lambda| = |\mu|+|\nu|$, $\mu,\nu\subseteq\lambda$ and $\mu\cup\nu\leq\lambda\leq\mu+\nu$.
\end{enumerate}
Here $(\mu+\nu)_j=\mu_j+\nu_j$ and $\mu\cup\nu$ is obtained by arranging the parts of $\mu$ and $\nu$ in descending order. We also recall that the skew Jack functions $P_{\lambda/\mu}$ can be defined by
\begin{equation}\label{skewP}
P_{\lambda/\mu}(\theta) = \sum_\nu f^{\lambda^\prime}_{\mu^\prime\nu^\prime}(1/\theta)P_\nu(\theta).
\end{equation}
Note that $P_{\lambda/\mu}$ is non-zero only if $\mu\subseteq\lambda$, in which case it is homogeneous of degree $|\lambda|-|\mu|$. Moreover, in the case of finitely many variables an explicit combinatorial formula entails
\begin{equation}\label{sPvan}
P_{\lambda/\mu}(z_1,\ldots,z_n)\equiv 0\ \ \mathrm{unless}\ 0\leq \lambda^\prime_j-\mu^\prime_j\leq n\ \mathrm{for~all}\ j\geq 1.
\end{equation}

\subsection{The super-Jack polynomials}\label{sec2.2} 
For $n,m\in\N$, Sergeev and Veselov \cite{SV04} (see Theorem 2) proved that the algebra $\Lambda_{n,m,\theta}$ (c.f.~\eqref{Lam}--\eqref{qinv}) is generated by the deformed power sums
$$
p_{r}(z,w;\theta) := \sum_{j=1}^n z_j^r - \frac{1}{\theta}\sum_{k=1}^m w_k^r,\ \ \ r\geq 1,
$$
and therefore $\varphi_{n,m}:p_r\mapsto p_{r}(z,w;\theta)$ defines a surjective homomorphism $\varphi_{n,m}:\Lambda\to\Lambda_{n,m,\theta}$. Letting $H_{n,m}$ denote the set of partitions with diagram contained in the fat $(n,m)$-hook, i.e.
\begin{equation} 
\label{Hnm}
H_{n,m} = \{\lambda\in\mathscr{P} : \lambda_{n+1}\leq m\},
\end{equation} 
they proved in Theorem 2 of \cite{SV05} that
$$
\ker\varphi_{n,m} = \spn\{P_\lambda : \lambda\notin H_{n,m}\}.
$$
Defining the super-Jack polynomials $SP_\lambda(z,w;\theta)$ in $n+m$ variables $z=(z_1,\ldots,z_n)$ and $w=(w_1,\ldots,w_m)$ by
\begin{equation}\label{SPDef}
SP_\lambda(z,w;\theta) := \varphi_{n,m}\big(P_\lambda(\theta)\big),
\end{equation}
it followed that the $SP_\lambda(z,w;\theta)$ with $\lambda\in H_{n,m}$ form a linear basis in $\Lambda_{n,m,\theta}$, whereas $SP_\lambda(z,w;\theta)\equiv 0$ whenever $\lambda\notin H_{n,m}$. We note that the super-Jack polynomials were first introduced by Kerov, Okounkov and Olshanski \cite{KOO98} in the setting of infinitely many variables $z=(z_1,z_2,\ldots)$ and $w=(w_1,w_2,\ldots)$, and that the terminology super-Jack polynomials appeared shortly thereafter in \cite{Oko97}. Moreover, when $m=0$,  $SP_\lambda(z,w;\theta)$ reduces to the ordinary Jack polynomial $P_\lambda(z;\theta)$, whereas $n=0$ yields the dual Jack polynomial $Q_{\lambda^\prime}(w;1/\theta)$. 

Given a partition $\lambda\in H_{n,m}$, we let
\begin{equation}
\label{es}
\begin{split} 
e(\lambda;n,m) &= (\langle\lambda_1-m\rangle,\ldots,\langle\lambda_n-m\rangle),\\ 
s(\lambda;n,m) &= (\langle\lambda^\prime_1-n\rangle,\ldots,\langle\lambda^\prime_m-n\rangle),
\end{split} 
\end{equation}
where
$$
\langle a\rangle = \max(0,a),
$$
so that  $e(\lambda;n,m)$ and $s(\lambda;n,m)$ correspond to the boxes in the diagram of $\lambda$ located to the east resp.~south of the $m\times n$ rectangle $(m^n)$; see Figure \ref{fig_fat_hook_partition}. 
We note that a partition $\lambda$ of the form \eqref{lambdamunu} is the unique partition with $e(\lambda;n,m)=\mu$ and $s(\lambda;n,m)=\nu$.
For later reference, we note
\begin{equation}\label{esDual}
\begin{split}
e(\lambda^\prime;n,m) = s(\lambda;m,n),\quad s(\lambda^\prime;n,m) = e(\lambda;m,n). 
\end{split}
\end{equation}
We often omit $n,m$ and write $e(\lambda)$ and $s(\lambda)$ if there is no danger of confusion. 

\begin{figure}[H]
\includegraphics[width=0.95\columnwidth]{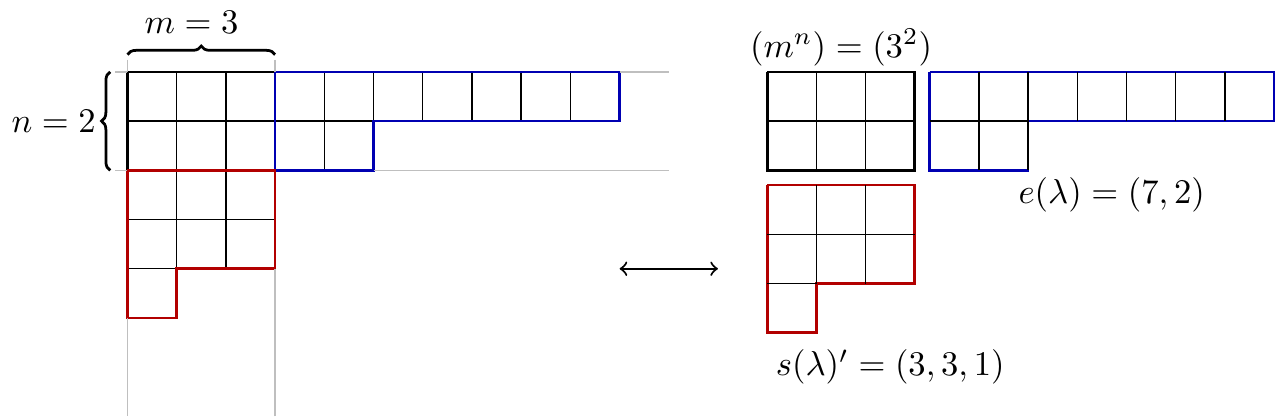}
\caption{The partition $\lambda=(10,5,3,3,1)$, which satisfies the condition \eqref{box} for $(n,m)=(2,3)$, along with its east and south components $e(\lambda)$ resp.~$s(\lambda)$, c.f.~\eqref{es}.}%
\label{fig_fat_hook_partition}
\end{figure}

The next result, which provides an expansion of super-Jack polynomials in terms of skew- and dual Jack polynomials (c.f.~\eqref{Qb} resp.~\eqref{skewP}), is essentially contained in \cite{SV05}, but, since it is a key ingredient in Section \ref{sec3}, we include a short proof.

\begin{lemma}\label{Lemma:SPExp}
For $\lambda\in H_{n,m}$, we have
\begin{equation}\label{SPExp}
SP_\lambda(z,w;\theta) = \sum_\mu(-1)^{|\mu|}P_{\lambda/\mu^\prime}(z;\theta)Q_\mu(w;1/\theta),
\end{equation}
with the sum running over all partitions $\mu$ satisfying
$$
s(\lambda)\subseteq\mu\subseteq s((\lambda^\prime+(n^m))^\prime),
$$
where $s((\lambda^\prime+(n^m))^\prime)=(\lambda^\prime_1,\ldots,\lambda^\prime_m)$.
\end{lemma}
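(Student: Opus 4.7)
The plan is to factor $\varphi_{n,m}$ through the coproduct of $\Lambda$ and then invoke the Jack coproduct formula together with the duality \eqref{PQ}. Concretely, observing that $\varphi_{n,m}(p_r) = p_r(z) + \sigma(p_r)(w)$ where $\sigma:\Lambda\to\Lambda$ is the algebra automorphism $\sigma(p_r):=-(1/\theta)p_r$, one sees that $\varphi_{n,m}$ factors through the coproduct $\Delta:\Lambda\to\Lambda\otimes\Lambda$, $p_r\mapsto p_r\otimes 1 + 1\otimes p_r$, via the evaluation $f\otimes g \mapsto f(z)\cdot\sigma(g)(w)$. The standard coproduct formula $\Delta(P_\lambda(\theta)) = \sum_\mu P_{\lambda/\mu}(\theta)\otimes P_\mu(\theta)$ (which is equivalent to \eqref{skewP} via the usual Hall duality of the $P$ and $Q$ bases) then gives
$$SP_\lambda(z,w;\theta) = \sum_\mu P_{\lambda/\mu}(z;\theta)\,\sigma(P_\mu(\theta))(w).$$

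Next I would compute $\sigma(P_\mu(\theta))$ using the factorisation $\sigma = \tau\circ\omega_{1/\theta}$, where $\tau$ is the algebra automorphism with $\tau(p_r) = (-1)^r p_r$; one checks this on generators via $(\tau\circ\omega_{1/\theta})(p_r) = \tau((-1)^{r-1}(1/\theta)p_r) = -(1/\theta)p_r$. Since $\tau$ multiplies any homogeneous element of degree $d$ by $(-1)^d$, and $Q_{\mu'}(1/\theta)$ is homogeneous of degree $|\mu|$, the duality \eqref{PQ} yields
$$\sigma(P_\mu(\theta)) = \tau(Q_{\mu'}(1/\theta)) = (-1)^{|\mu|}\,Q_{\mu'}(1/\theta).$$
Substituting back and relabelling $\nu:=\mu'$ then produces the expansion \eqref{SPExp}.

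The range of $\nu$ is determined by the vanishing of the two factors. The criterion \eqref{sPvan} applied to $P_{\lambda/\nu'}(z_1,\ldots,z_n;\theta)$ forces $\langle\lambda'_j-n\rangle\le\nu_j\le\lambda'_j$ for every $j\ge 1$, while $Q_\nu(w_1,\ldots,w_m;1/\theta)$ vanishes whenever $\ell(\nu)>m$, forcing $\nu_j=0$ for $j>m$ (which is compatible with the lower bound precisely because $\lambda\in H_{n,m}$ implies $\lambda'_j\le n$ for $j>m$). Together with the identity $s((\lambda'+(n^m))';n,m)=(\lambda'_1,\ldots,\lambda'_m)$ noted in the statement, these constraints are exactly $s(\lambda)\subseteq\nu\subseteq s((\lambda'+(n^m))')$. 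I expect the computation of $\sigma(P_\mu(\theta))$ to be the main obstacle: tracking the signs and the $\theta\leftrightarrow 1/\theta$ swap is delicate, and the duality \eqref{PQ} only enters after the factorisation $\sigma = \tau\circ\omega_{1/\theta}$ is recognised and combined with the observation that $\tau$ acts by $(-1)^{\deg}$ on homogeneous symmetric functions.
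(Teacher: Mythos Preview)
Your proposal is correct and follows essentially the same route as the paper: the paper likewise starts from the Jack coproduct expansion $P_\lambda(z,w;\theta)=\sum_\mu P_{\lambda/\mu^\prime}(z;\theta)P_{\mu^\prime}(w;\theta)$, applies to the $w$-variables the automorphism $\sigma_\theta$ defined by $(\sigma_\theta(p_r))(w)=(\omega_{1/\theta}(p_r))(-w)=-\theta^{-1}p_r(w)$ (i.e.\ your $\tau\circ\omega_{1/\theta}$), invokes the duality \eqref{PQ} to obtain $(-1)^{|\mu|}Q_\mu(w;1/\theta)$, and then trims the summation range via the same two vanishing criteria you identify. The only cosmetic difference is that the paper phrases the first step as ``two infinite alphabets, then specialise'' rather than in explicit coproduct language.
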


\begin{proof}
Letting $z=(z_1,z_2,\ldots)$ and $w=(w_1,w_2,\ldots)$ be two infinite sets of variables, we start from the expansion
$$
P_\lambda(z,w;\theta) = \sum_{\mu\subseteq\lambda^\prime}P_{\lambda/\mu^\prime}(z;\theta)P_{\mu^\prime}(w;\theta),
$$
see e.g.~Section VI.7 in \cite{Mac95}. Applying, with respect to the variables $w$, the automorphism $\sigma_\theta:\Lambda\to\Lambda$ given by
$$
\big(\sigma_\theta(p_r)\big)(w) = \big(\omega_{1/\theta}(p_r)\big)(-w) = -\frac{1}{\theta}p_r(w),\ \ \ r\geq 1,
$$
and setting $z_j=w_k=0$ for $j>n$ and $k>m$, we get
$$
SP_\lambda(z,w;\theta) = \sum_{\mu\subseteq\lambda^\prime}(-1)^{|\mu|}P_{\lambda/\mu^\prime}(z;\theta)Q_{\mu}(w;1/\theta),
$$
c.f.~\eqref{PQ}. We observe that $\mu\not\subseteq s((\lambda^\prime+(n^m))^\prime)$ entails $\mu_{m+1}\neq 0$, and consequently that $Q_\mu(w_1,\ldots,w_m)\equiv 0$. On the other hand, $s(\lambda)\not\subseteq\mu$ if and only if $\lambda^\prime_j-\mu_j>n$ for some $j=1,\ldots,m$, so that $P_{\lambda/\mu^\prime}(z_1,\ldots,z_n)\equiv 0$, c.f.~\eqref{sPvan}.
\end{proof}

We note the duality relation
\begin{equation}\label{dualRel}
SP_\lambda(z,w;\theta) = (-1)^{|\lambda|}SQ_{\lambda^\prime}(w,z;1/\theta)
\end{equation}
readily inferred from \eqref{PQ}, \eqref{SPDef} and the identity $-\theta p_{r}(z,w;\theta)=p_{r}(w,z;1/\theta)$; see e.g.~Proposition~6.1 in \cite{DH12}.

\subsection{Deformed Calogero-Moser-Sutherland operators}
As previously mentioned, the functions $\Psi_\lambda$ \eqref{Psi} are eigenfunctions not only of the deformed CMS operator $\cH_{n,m,\theta}$ \eqref{dCMSOp} but of all quantum integrals constructed by Sergeev and Veselov \cite{SV04}. To describe their construction, which will be used in Section \ref{sec3}, it is convenient to set
\begin{equation}
\label{z}
z_{n+j} = w_j,\ \ \ j=1,\ldots,m
\end{equation}
and distinguish between the first $n$ and the last $m$ variables by the value of the parity function
\begin{equation}
\label{p}
p(j) := \left\{
\begin{array}{ll}
0, & j=1,\ldots,n\\
1, & j=n+1,\ldots,n+m 
\end{array}\right. .
\end{equation}
Letting
$$
\partial^{(1)}_j=(-\theta)^{p(j)}z_j\frac{\partial}{\partial z_j},
$$
PDOs $\partial^{(r)}_j$ of order $r>1$ are defined recursively by
\begin{equation}\label{par}
\partial^{(r)}_j=\partial^{(1)}_j\partial^{(r-1)}_j-\frac{1}{2}\sum_{k\neq j}(-\theta)^{1-p(k)}\frac{z_j+z_k}{z_j-z_k}\big(\partial^{(r-1)}_j-\partial^{(r-1)}_k\big).
\end{equation}
With $e_j$, $j=1,\ldots,n+m$, the standard basis vectors in $\R^{n+m}$, we consider the vector
$$
\rho_\theta = \frac{1}{2}\sum_{1\leq j<k\leq n+m}(-\theta)^{1-p(j)-p(k)}(e_j-e_k)
$$
and the $\theta$-dependent indefinite inner product given by
$$
(u,v)_\theta = \sum_{j=1}^nu_jv_j - \theta\sum_{k=1}^mu_{n+k}v_{n+k}.
$$
From Lemma 1.1 in \cite{Ser02}, we have
\begin{equation*}
\begin{split}
\cL_{n,m,\theta} &:= -\Psi_0^{-1}\circ\big(\cH_{n,m,\theta} - (\rho_\theta,\rho_\theta)_\theta\big)\circ\Psi_0\\
&= \sum_{j=1}^{n+m}(-\theta)^{p(j)}\left(z_j\frac{\partial}{\partial z_j}\right)^2\\
&\quad - \sum_{1\leq j<k\leq n+m}(-\theta)^{1-p(j)-p(k)}\frac{z_j+z_k}{z_j-z_k}\left((-\theta)^{p(j)}z_j\frac{\partial}{\partial z_j}-(-\theta)^{p(k)}z_k\frac{\partial}{\partial z_k}\right).
\end{split}
\end{equation*}
Comparing the latter expression for $\cL_{n,m,\theta}$ with \eqref{par}, it is readily verified that
$$
\cL_{n,m,\theta} = \sum_{j=1}^{n+m}(-\theta)^{p(j)}\partial^{(2)}_j.
$$
By direct and rather lengthy computations, Sergeev and Veselov \cite{SV04} proved that all of the PDOs
\begin{equation}\label{cLr}
\cL^{(r)}_{n,m,\theta} := \sum_{j=1}^{n+m}(-\theta)^{-p(j)}\partial^{(r)}_j,\ \ \ r\in\N,
\end{equation}
pairwise commute:
$$
\left[\cL^{(r)}_{n,m,\theta},\cL^{(s)}_{n,m,\theta}\right]=0,\ \ \ \forall r,s\in\N.
$$
An elegant proof of this result, relying on a notion of a Dunkl operator `at infinity', was recently obtained by the same authors in \cite{SV15}.

Moreover, the super-Jack polynomials $SP_\lambda(z,w;\theta)$, $\lambda\in H_{n,m}$, satisfy PDEs
\begin{equation}\label{PDEs}
\cL^{(r)}_{n,m,\theta}SP_\lambda=\cZ_r(\nu_\lambda-\rho_\theta)SP_\lambda
\end{equation}
with
$$
\nu_\lambda = (e(\lambda+(m^n)),s(\lambda))
$$
and $\cZ_r$ polynomials of the form
$$
\cZ_r(u,v) = \sum_{j=1}^nu_j^r + (-\theta)^r\sum_{k=1}^mv_k^r + \mathrm{l.d.t.},
$$
where $\mathrm{l.d.t.}$ is short for lower degree terms. Crucially, for our purposes, the joint eigenvalues $(\cZ_r(\nu_\lambda-\rho_\theta))_{r\in\N}$ separate the partitions in $H_{n,m}$, i.e.
\begin{equation}\label{cZEq}
\cZ_r(\nu_\lambda-\rho_\theta) = \cZ_r(\nu_\mu-\rho_\theta)\, \forall r\geq 1 \Leftrightarrow \lambda = \mu.
\end{equation}
These last two results are established in Section 6 of \cite{SV05}.

\section{Orthogonality and norms}\label{sec3}
This section contains our main results: orthogonality relations for the super-Jack polynomials and explicit formulae for their (quadratic) norms with respect to a natural generalisation of Macdonald's inner product \eqref{MacProd}.

We let $\mathbb{C}(z,w)$ denote the field of complex rational functions in $n+m$ variables $z=(z_1,\ldots,z_n)$ and $w=(w_1,\ldots,w_m)$. Given any $f\in\C(z,w)$, we introduce its conjugate $f^*\in\C(z,w)$ by
\begin{equation}\label{conj}
f^*(z,w)=\overline{f(\bar{z}^{-1},\overline{w}^{-1})},
\end{equation}
where
$$
\bar{z}^{-1} := (1/\bar{z}_1,\ldots,1/\bar{z}_n),\ \ \ \overline{w}^{-1} := (1/\overline{w}_1,\ldots,1/\overline{w}_m).
$$
Recalling the tori $T_\xi^n$ \eqref{tori}, the weight function $\Delta_{n,m}$ \eqref{Delnm} and the algebra $\Lambda_{n,m,\theta}$ \eqref{Lam}--\eqref{qinv}, we proceed to define the relevant sesquilinear product. For technical reasons, we first consider $z\in T_\xi^n$ and $w\in T_{\xi^\prime}^m$ with $\xi^\prime>\xi$. Once our main results have been proved, we will be able to relax this condition and allow also $\xi^\prime<\xi$; see Corollary \ref{Cor:indep}. The definition now follows.

\begin{definition}\label{Def:prod}
Letting $\theta>0$ and $\xi^\prime>\xi>0$, we define the sesquilinear product
$$
\langle\cdot,\cdot\rangle^\prime_{n,m,\theta}:\Lambda_{n,m,\theta}\times\Lambda_{n,m,\theta}\to\C
$$
by setting
\begin{equation}\label{HermProd}
\langle p,q\rangle^\prime_{n,m,\theta} = \frac{1}{n!m!}\int_{T_\xi^n}d\measure_n(z) \int_{T_{\xi^\prime}^m}d\measure_m(w)\, \De_{n,m}(z,w;\theta)p(z,w)q^*(z,w)
\end{equation}
for all $p,q\in\Lambda_{n,m,\theta}$.
\end{definition}

Due to the following result, the specific choice of $\xi,\xi^\prime$ has no effect on the values of the product $\langle\cdot,\cdot\rangle^\prime_{n,m,\theta}$.

\begin{lemma}\label{Lemma:indep}
For $\theta>0$ and any $p,q\in\Lambda_{n,m,\theta}$, the value of $\langle p,q\rangle^\prime_{n,m,\theta}$ is independent of $\xi,\xi^\prime>0$ provided that $\xi^\prime>\xi$.
\end{lemma}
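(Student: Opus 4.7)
The plan is to pull the contours back to the unit tori via the change of variables $z=\xi\tilde z$, $w=\xi'\tilde w$, Laurent-expand every remaining $(\xi,\xi')$-dependent piece of the integrand, and then match bi-degrees to show that the $(\xi,\xi')$-dependence cancels term by term. Under this rescaling, the measures $d\measure_n,d\measure_m$ (since $dz/z=d\tilde z/\tilde z$) and the factors $\De_n(\tilde z;\theta),\De_m(\tilde w;1/\theta)$ (since they depend only on ratios) are manifestly invariant, so all the $(\xi,\xi')$-dependence sits in the denominator
\[
\prod_{j,k}(1-\tau\tilde z_j/\tilde w_k)(1-\tau^{-1}\tilde w_k/\tilde z_j),\qquad \tau:=\xi/\xi'\in(0,1),
\]
and in the factor $p(\xi\tilde z,\xi'\tilde w)q^{*}(\xi\tilde z,\xi'\tilde w)$.

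Next, I would apply the elementary identity
\[
\frac{1}{(1-\tau x)(1-\tau^{-1}/x)} \;=\; -\sum_{t\geq 1} t\,\tau^{t}x^{t},\qquad |x|=1,\ 0<\tau<1,
\]
to expand the denominator as
\[
\prod_{j,k}\frac{1}{(1-\tau\tilde z_j/\tilde w_k)(1-\tau^{-1}\tilde w_k/\tilde z_j)} \;=\; \sum_{T\geq nm}\tau^{T}G_T(\tilde z,\tilde w),
\]
an absolutely and uniformly convergent series on $T^n\times T^m$ each of whose coefficients $G_T$ is a Laurent polynomial that is bi-homogeneous of bi-degree $(T,-T)$ in $(\tilde z,\tilde w)$. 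Expanding $p=\sum c_{\alpha,\beta}z^\alpha w^\beta$ monomially and the Laurent polynomial $q^{*}=\sum\bar c_{\gamma,\delta}z^{-\gamma}w^{-\delta}$ correspondingly, every monomial pairing contributes, after the rescaling, a prefactor $\xi^{|\alpha|-|\gamma|}(\xi')^{|\beta|-|\delta|}$ multiplying a Laurent monomial in $(\tilde z,\tilde w)$ of bi-degree $(|\alpha|-|\gamma|,|\beta|-|\delta|)$.

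Finally, I would invoke the $U(1)\times U(1)$ invariance of $\De_n(\tilde z)\De_m(\tilde w)\,d\measure_n\,d\measure_m$ under the independent rotations $\tilde z\mapsto e^{i\phi}\tilde z$ and $\tilde w\mapsto e^{i\psi}\tilde w$: the integral over $T^n\times T^m$ annihilates any contribution whose total $\tilde z$- or $\tilde w$-degree is nonzero. After multiplication by $\tau^TG_T$, the surviving terms are exactly those with $T=|\gamma|-|\alpha|=|\beta|-|\delta|$, and for each such term
\[
\xi^{|\alpha|-|\gamma|}(\xi')^{|\beta|-|\delta|}\,\tau^{T}=\xi^{-T}(\xi')^{T}(\xi/\xi')^{T}=1,
\]
so the $(\xi,\xi')$-dependence vanishes termwise and hence after summation, yielding the lemma. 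The one (mild) technical obstacle is justifying the termwise integration of the $\tau$-series; this follows from the absolute and uniform convergence established above combined with the fact that $p$ and $q^{*}$ contribute only finitely many monomials, and it is also the only place where the hypothesis $\xi'>\xi$, i.e.\ $\tau\in(0,1)$, is used.
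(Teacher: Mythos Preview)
Your argument is correct. The identity $\frac{1}{(1-\tau x)(1-\tau^{-1}x^{-1})}=-\sum_{t\geq 1}t\,\tau^t x^t$ (checked by writing $\frac{1}{1-\tau^{-1}x^{-1}}=\frac{-\tau x}{1-\tau x}$) yields a uniformly convergent expansion on $T^n\times T^m$ with the stated bi-homogeneity, and the $U(1)\times U(1)$ invariance of $\Delta_n(\tilde z;\theta)\Delta_m(\tilde w;1/\theta)\,d\measure_n\,d\measure_m$ forces exactly the selection rule you claim, after which the powers of $\xi$ and $\xi'$ cancel. You also correctly note that only $\tau\in(0,1)$ enters, and that nothing in the proof uses the quasi-invariance condition, so the result in fact holds for all $p,q\in\C[z,w]$.

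Your approach differs substantially from the paper's principal proof. The paper first shows (via the Euler operator $E=\cL^{(1)}_{n,m,\theta}$) that distinct homogeneous degrees are orthogonal; then, restricting to a single degree, it rescales to $\xi=1$, invokes Cauchy's theorem in $w$ to handle $\xi'$ for $1/\theta\in\N$ (where $\Delta_m(w;1/\theta)$ is a Laurent polynomial), and finally appeals to Carlson's theorem to analytically continue in $\theta$. By contrast, you never leave the real torus in $w$ and never analytically continue in $\theta$; your series argument treats all $\theta>0$ uniformly and bypasses Carlson's theorem entirely. This makes your proof more elementary and closer in spirit to the alternative proof the paper sketches in its Remark (which expands the denominator via the Cauchy identity for Schur polynomials and reduces to independence of Macdonald's product $\langle\cdot,\cdot\rangle_n'$). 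Your version streamlines even that sketch by using the one-variable generating function directly and replacing the reduction to Macdonald's product with a bi-degree selection rule. The cost is that you must justify termwise integration, but, as you observe, uniform convergence on the compact torus together with boundedness of the weight makes this immediate.
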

\begin{proof}
For $k\in\Z_{\geq 0}$, we let $P_{n,m}^k\subset \C[z,w]$ denote the subspace of homogeneous (complex) polynomials of degree $k$ in $n+m$ independent variables. Extending $\langle\cdot,\cdot\rangle^\prime_{n,m,\theta}$ in the obvious way to all of $\C[z,w]$, we claim that
$$
\big\langle P_{n,m}^k,P_{n,m}^l\big\rangle^\prime_{n,m,\theta} = 0\ \ \mathrm{if}\ \ k\neq l.
$$
Taking this claim for granted, we may restrict attention to $p,q\in P_{n,m}^k$. Substituting $z\to\xi z$ and $w\to\xi w$ in \eqref{HermProd}, using \eqref{scaleInv} (and similarly for $\Delta_m(w;1/\theta)$ and $d\measure_m(w)$) as well as homogeneity of $p,q$, we see that no generality is lost by setting $\xi=1$, say. Now, let us assume that $1/\theta\in\N$, so that $\Delta_m(w;1/\theta)$ extends to an analytic function on $(\C\setminus\{0\})^m$. Independence of $\xi^\prime>1$ is then a simple consequence of Cauchy's theorem. Finally, by appealing to Carlson's theorem (see e.g.~Section~5.8 in \cite{Tit39}), we can extend this result to all $\theta>0$. A sufficient bound on $|\langle p,q\rangle^\prime_{n,m,\theta}|$ (with respect to $\theta$) is readily obtained by combining the observation
$$
|\langle p,q\rangle^\prime_{n,m,\theta}| < C\langle 1,1\rangle^\prime_{n,\theta}\cdot \langle 1,1\rangle^\prime_{m,1/\theta},
$$
where $C\equiv C(\xi,\xi^\prime,p,q)>0$, with the explicit norms formulae \eqref{Nlam} for $\lambda=\emptyset$ and Stirling's formula for the Gamma function.

At this point, there remains only to verify our claim. 
Considering the Euler operator 
$$
E = \sum_{j=1}^n z_j\frac{\partial}{\partial z_j} + \sum_{k=1}^m w_k\frac{\partial}{\partial w_k}
$$
(which coincides with $\cL^{(1)}_{n,m,\theta}$), a direct computation reveals that
$$
E\Delta_{n,m}(z,w;\theta) = 0.
$$
In addition, we have
$$
Ef^*=-(Ef)^*,\ \ \ f\in\C[z,w].
$$
Let $p\in P_{n,m}^k$ and $q\in P_{n,m}^l$. From the above, we infer
\begin{multline*}
E\Delta_{n,m}(z,w;\theta)p(z,w)q^*(z,w)\\
= \Delta_{n,m;\theta}(z,w)\big[q^*(z,w)Ep(z,w) - p(z,w)(Eq)^*(z,w)\big],
\end{multline*}
and therefore
$$
\langle Ep,q\rangle^\prime_{n,m,\theta} = \langle p,Eq\rangle^\prime_{n,m,\theta}.
$$
Since $Ep=kp$ and $Eq=lq$, the claim follows.
\end{proof}

\begin{remark}
An alternative proof of the Lemma can be obtained as follows: first, rewrite the weight function $\Delta_{n,m}(z,w;\theta)$ as in the proof of Theorem \ref{Thm:orth}; second, substitute, e.g., the expansion
$$
\prod_{j=1}^n\prod_{k=1}^m(1-z_j/w_k)^{-1} = \sum_\lambda s_\lambda(z)s_\lambda(w^{-1}),
$$
where $s_\lambda$ are the Schur polynomials; third, write $p$ and $q$ in the form
$$
p(z,w) = \sum_l p_{1,l}(z)p_{2,l}(w),\ \ \ q(z,w) = \sum_l q_{1,l}(z)q_{2,l}(w);
$$
and, finally, use the fact that the value of Macdonald's inner product (when written as in \eqref{MacProdAlt}) is independent of $\xi>0$.
\end{remark}

At a later stage, we shall see that we need, in fact, only require $\xi^\prime\neq\xi$. Although the result of Lemma \ref{Lemma:indep} remains valid for any polynomials $p,q\in\C[z,w]$, it is readily seen (already in the $n=m=1$ case) that this stronger claim does not.

Next, we prove that that the deformed CMS operators $\cL^{(r)}_{n,m,\theta}$ \eqref{cLr} are symmetric with respect to $\langle\cdot,\cdot\rangle^\prime_{n,m,\theta}$.

\begin{proposition}
Let $\theta>0$. Then we have
\begin{equation}\label{sym}
\left\langle\cL^{(r)}_{n,m,\theta}\, p,q\right\rangle^\prime_{n,m,\theta}=\left\langle p,\cL^{(r)}_{n,m,\theta}\, q\right\rangle^\prime_{n,m,\theta}
\end{equation}
for all $r\in\N$ and $p,q\in\Lambda_{n,m,\theta}$.
\end{proposition}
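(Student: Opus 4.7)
The plan is to prove formal self-adjointness of $\cL^{(r)}_{n,m,\theta}$ with respect to the weighted measure $\Delta_{n,m}\,d\measure_n\,d\measure_m$ directly, via integration by parts on the closed contours $T_\xi^n$ and $T_{\xi^\prime}^m$.

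The case $r=1$ is immediate from the argument in the proof of Lemma \ref{Lemma:indep}, since $\cL^{(1)}_{n,m,\theta}$ coincides with the Euler operator $E$ and distinct homogeneous components are $\langle\cdot,\cdot\rangle^\prime_{n,m,\theta}$-orthogonal. The case $r=2$ can be reduced, via the gauge relation $\cL_{n,m,\theta} = -\Psi_0^{-1}\circ(\cH_{n,m,\theta}-(\rho_\theta,\rho_\theta)_\theta)\circ\Psi_0$ recalled before \eqref{cLr}, to formal self-adjointness of the real second-order Schr\"odinger operator $\cH_{n,m,\theta}$ on Lebesgue measure on the real torus; the passage from the real torus (where the weight $\Psi_0^2\,dx\,dy$ agrees up to a positive constant with $\Delta_{n,m}\,d\measure$, but the denominator of $\Delta_{n,m}$ becomes singular) to the complex contours $T^n_\xi\times T^m_{\xi^\prime}$ is then justified by the independence granted by Lemma \ref{Lemma:indep}.

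For general $r$ I would induct using the recursive definition \eqref{par}. The key ingredients are: the identity $\int_{T^n_\xi} z_j\partial_{z_j} f\,d\measure_n = 0$, which makes $z_j\partial_{z_j}$ antisymmetric on the closed contour; the antisymmetry $\bigl(\tfrac{z_j+z_k}{z_j-z_k}\bigr)^* = -\tfrac{z_j+z_k}{z_j-z_k}$, from which an easy induction on \eqref{par} yields $(\partial^{(r)}_j)^* = (-1)^r\partial^{(r)}_j$; and the logarithmic derivative identity
$$
z_j\partial_{z_j}\log\Delta_{n,m}(z,w;\theta) = \theta\sum_{\substack{k\neq j\\ k\leq n}}\frac{z_j+z_k}{z_j-z_k} - \sum_\ell \frac{z_j+w_\ell}{z_j-w_\ell},
$$
together with its analogue for $w_\ell\partial_{w_\ell}$, which controls how $\partial^{(1)}_j$ passes through $\Delta_{n,m}$. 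Combining these with \eqref{par} allows one to express the formal adjoint of $\cL^{(r)}_{n,m,\theta}$ with respect to $\Delta_{n,m}\,d\measure$ in terms of lower-order $\partial^{(s)}_j$, reducing the claim at level $r$ to the inductive hypothesis.

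The main obstacle is the cancellation in the inductive step: each $\partial^{(r)}_j$ in isolation is not symmetric, and its formal adjoint with respect to $\Delta_{n,m}\,d\measure$ is not a linear combination of the $\partial^{(r)}_k$ alone. The symmetry of the full sum relies on a precise cancellation of the ``off-diagonal'' pieces, enforced by the prefactors $(-\theta)^{-p(j)}$ in $\cL^{(r)}_{n,m,\theta}$ together with the antisymmetry of $\tfrac{z_j+z_k}{z_j-z_k}$ under $j\leftrightarrow k$. A conceptually cleaner route, sidestepping this bookkeeping, would be to use the Dunkl-operator-at-infinity construction of \cite{SV15}: $\cL^{(r)}_{n,m,\theta}$ is realised there as a power sum in super-Dunkl operators restricted to $\Lambda_{n,m,\theta}$, and formal self-adjointness of the super-Dunkl operators with respect to $\Delta_{n,m}\,d\measure_n\,d\measure_m$ would yield the symmetry of all $\cL^{(r)}_{n,m,\theta}$ simultaneously.
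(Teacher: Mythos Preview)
You have collected exactly the right ingredients---the contour identity $\int_{T_\xi} z_j\partial_{z_j}f\,d\measure=0$, the formula $\Delta_{n,m}^{-1}\partial_j^{(1)}\Delta_{n,m}=-\sum_{k\neq j}(-\theta)^{1-p(k)}\frac{z_j+z_k}{z_j-z_k}$, and the $j\leftrightarrow k$ antisymmetry---but the organising principle you propose does not quite close. An induction on $r$ with hypothesis ``$\cL^{(s)}$ is symmetric for $s<r$'' is too weak: after one integration by parts the boundary term coming from the weight produces a combination of the individual $\partial_j^{(r-1)}$, not $\cL^{(r-1)}$ itself, and (as you already note) the individual $\partial_j^{(r-1)}$ are not symmetric. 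You cannot invoke the hypothesis on that expression. Your $r=2$ argument has a separate problem: formal self-adjointness of $\cH_{n,m,\theta}$ lives on the real torus $\xi=\xi'$, where the weight $\Delta_{n,m}$ has non-integrable singularities along $z_j=w_k$; Lemma~\ref{Lemma:indep} only moves between tori with $\xi'>\xi$, so it does not let you pass to (or from) the singular configuration.

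The paper's proof uses the same ingredients but packages them as a \emph{shifting} identity rather than an induction on $r$. For fixed $r$ one shows that the quantity
\[
S(s):=\sum_{j=1}^{n+m}(-\theta)^{-p(j)}\big\langle\partial_j^{(r-s)}p,\partial_j^{(s)}q\big\rangle'_{n,m,\theta}
\]
is independent of $s\in\{0,\ldots,r\}$; taking $s=0$ and $s=r$ gives the two sides of \eqref{sym}. To pass from $S(s)$ to $S(s+1)$ one integrates by parts in $z_j$ to move a single $\partial_j^{(1)}$ across; the term picked up from $\Delta_{n,m}^{-1}\partial_j^{(1)}\Delta_{n,m}$ combines with the recursion \eqref{par} on both slots, and after multiplying by $(-\theta)^{-p(j)}$ and summing over $j$ the residual cross terms cancel by the $j\leftrightarrow k$ swap. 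This is exactly the cancellation you anticipated, but carried out at each step of the shift rather than all at once; there is no appeal to a lower-$r$ hypothesis, and the argument stays entirely on the regular contours $\xi'>\xi$.
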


\begin{proof}
Rewriting the weight function $\De_{n,m}$ in terms of the notation \eqref{z} and the parity function $p(j)$ \eqref{p}, we obtain the expression
\begin{equation}
\De_{n,m}(z;\theta) = \prod_{1\leq j\neq k\leq n+m}(1-z_j/z_k)^{-(-\theta)^{1-p(j)-p(k)}}.
\end{equation}
For what follows, we need to extend the definition of the product $\langle\cdot,\cdot\rangle^\prime_{n,m,\theta}$ to suitable rational functions. 
More precisely, for $f,g\in\C(z,w)$, we let \eqref{HermProd} define $\langle f,g\rangle_{n,m,\theta}^{\prime}$ whenever the right-hand side is well-defined.

To prove \eqref{sym}, it clearly suffices to establish the equality
\begin{multline}
\label{sumEq}
\sum_{j=1}^{n+m}(-\theta)^{-p(j)}\left\langle\partial^{(r-s)}_jp,\partial^{(s)}_jq\right\rangle^\prime_{n,m,\theta}\\
= \sum_{j=1}^{n+m}(-\theta)^{-p(j)}\left\langle\partial^{(r-s-1)}_jp,\partial^{(s+1)}_jq\right\rangle^\prime_{n,m,\theta}
\end{multline}
for $0\leq s\leq r-1$. From Section~5 of \cite{SV15}, we recall that
each PDO $\partial^{(r)}_j$ maps $\Lambda_{n,m,\theta}$ into $\Lambda_{n,m,\theta}[z_j]$ (the algebra of polynomials in $z_j$ with coefficients from $\Lambda_{n,m,\theta}$), which ensures that each term is well-defined. Using integration by parts in $z_j$, we deduce that
\begin{multline}
\label{parEq}
\left\langle \partial^{(1)}_j\partial^{(r-s-1)}_jp,\partial^{(s)}_jq\right\rangle^\prime_{n,m,\theta} = \left\langle \partial^{(r-s-1)}_jp,\partial^{(1)}_j\partial^{(s)}_jq\right\rangle^\prime_{n,m,\theta}\\
-\left\langle \big(\De_{n,m}^{-1}\partial^{(1)}_j\De_{n,m}\big)\partial_{j}^{(r-s-1)}p,\partial^{(s)}_jq\right\rangle^\prime_{n,m,\theta}.
\end{multline}
By a direct computation, we find that
$$
\De_{n,m}^{-1}\partial^{(1)}_j\De_{n,m}=-\sum_{k\neq j}(-\theta)^{1-p(k)}\frac{z_j+z_k}{z_j-z_k}, \quad j = 1,\ldots,n+m.
$$
Hence $\theta>0$ guarantees that $\partial^{(1)}_j\De_{n,m}$ is (locally) integrable, so that the latter term in the right-hand side of \eqref{parEq} is well-defined. Recalling \eqref{par}, we thus infer
\begin{multline*}
\left\langle\partial^{(r-s)}_jp,\partial^{(s)}_jq\right\rangle^\prime_{n,m,\theta} = \left\langle\partial^{(r-s-1)}_jp,\partial^{(1)}_j\partial^{(s)}_jq\right\rangle^\prime_{n,m,\theta}\\
-\frac{1}{2}\sum_{k\neq j}(-\theta)^{1-p(k)}\left\langle\big(\partial^{(r-s-1)}_j+\partial^{(r-s-1)}_k\big)p,\frac{z_j+z_k}{z_j-z_k}\partial^{(s)}_jq\right\rangle^\prime_{n,m,\theta}.
\end{multline*}
Multiplying by $(-\theta)^{-p(j)}$ and taking the sum over $j$, we use invariance under the interchange $j\leftrightarrow k$ to obtain \eqref{sumEq}.
\end{proof}

Recalling the definitions of $N_n(\lambda;\theta)$ \eqref{Nlam}, $b_\lambda(\theta)$ \eqref{Qb} and $H_{n,m}$ \eqref{Hnm}, we are now ready to state and prove our main result.

\begin{theorem}\label{Thm:orth}
For $\theta>0$, the super-Jack polynomials $SP_\lambda(z,w;\theta)$, $\lambda\in H_{n,m}$,  satisfy the orthogonality relations
\begin{equation}\label{orth}
\langle SP_\lambda,SP_\mu\rangle^\prime_{n,m,\theta} = \delta_{\lambda\mu}N_{n,m}(\lambda;\theta)
\end{equation}
with (quadratic) norms
\begin{equation}\label{qnorm1}
N_{n,m}(\lambda;\theta) = 0\ \ \mathrm{if}\ \  (m^n)\not\subseteq\lambda
\end{equation}
and
\begin{equation}\label{qnorm2}
N_{n,m}(\lambda;\theta) = N_n(e(\lambda);\theta)N_m(s(\lambda);1/\theta) \frac{b_{e(\lambda)}(\theta)b_{s(\lambda)}(1/\theta)}{b_\lambda(\theta)}\ \ \mathrm{if}\ 
 (m^n)\subseteq\lambda.
\end{equation}
\end{theorem}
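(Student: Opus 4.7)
The plan is to combine the symmetry of the commuting deformed CMS operators $\cL^{(r)}_{n,m,\theta}$ with an explicit term-isolation argument based on Lemma \ref{Lemma:SPExp}. The off-diagonal orthogonality is the standard consequence of symmetry and eigenvalue separation: given distinct $\lambda,\mu\in H_{n,m}$, \eqref{cZEq} supplies $r\in\N$ with $\cZ_r(\nu_\lambda-\rho_\theta)\neq\cZ_r(\nu_\mu-\rho_\theta)$, and combining the symmetry of $\cL^{(r)}_{n,m,\theta}$ established in the preceding Proposition with the (real-valued) eigenvalue equations \eqref{PDEs} gives $\big(\cZ_r(\nu_\lambda-\rho_\theta)-\cZ_r(\nu_\mu-\rho_\theta)\big)\langle SP_\lambda,SP_\mu\rangle'_{n,m,\theta}=0$, forcing $\langle SP_\lambda,SP_\mu\rangle'_{n,m,\theta}=0$.

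For the diagonal norms, I would first rewrite the weight via the elementary identity $(1-w_k/z_j)^{-1}=-(z_j/w_k)(1-z_j/w_k)^{-1}$ as
$$
\Delta_{n,m}(z,w;\theta)=(-1)^{nm}\frac{(z_1\cdots z_n)^m}{(w_1\cdots w_m)^n}\Delta_n(z;\theta)\Delta_m(w;1/\theta)\prod_{j,k}(1-z_j/w_k)^{-2}.
$$
Since $\xi^\prime>\xi$, the last factor admits a convergent power series $\sum_{(a_{jk})\geq 0}\prod(a_{jk}+1)(z_j/w_k)^{a_{jk}}$. Substituting this together with the Lemma \ref{Lemma:SPExp} expansions of $SP_\lambda$ and $SP_\lambda^*(z,w;\theta)=SP_\lambda(z^{-1},w^{-1};\theta)$ yields a multiple sum indexed by partition pairs $(\sigma,\tau)$ and multi-indices $(a_{jk})$. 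Because $\Delta_n$ and $\Delta_m$ are scale-invariant, the $z$-integral of any Laurent monomial against $\Delta_n\,d\measure_n$ vanishes unless the monomial has total $z$-degree zero; applied to our integrand, this forces the constraint $|\sigma|-|\tau|\geq nm$.

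By Lemma \ref{Lemma:SPExp} both $|\sigma|$ and $|\tau|$ lie in the interval $[|s(\lambda)|,\lambda_1^\prime+\cdots+\lambda_m^\prime]$, which has length $\sum_{k=1}^m\min(n,\lambda_k^\prime)\leq nm$, with equality precisely when $(m^n)\subseteq\lambda$. Hence if $(m^n)\not\subseteq\lambda$ the constraint $|\sigma|-|\tau|\geq nm$ cannot be satisfied and $N_{n,m}(\lambda;\theta)=0$. If $(m^n)\subseteq\lambda$, the only surviving data is $\sigma=(\lambda_1^\prime,\ldots,\lambda_m^\prime)$, $\tau=s(\lambda)$, with all $a_{jk}=0$, and the integrand factorises into independent $z$- and $w$-integrands, with the prefactor $(z_1\cdots z_n)^m/(w_1\cdots w_m)^n$ distributed between them.

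In this surviving term $\sigma^\prime=(m^n,s(\lambda)^\prime)$. The $w$-integral simplifies via $(w_1\cdots w_m)^{-n}P_\sigma(w;1/\theta)=P_{s(\lambda)}(w;1/\theta)$ (valid since $\sigma_k\geq n$ for $k\leq m$) and $Q=bP$, reducing to a Macdonald inner product that evaluates through \eqref{Porth}--\eqref{Nlam} to $N_m(s(\lambda);1/\theta)$ together with explicit $b$-factors. A parallel treatment of the $z$-integral, obtained by expanding $P_{\lambda/\sigma^\prime}$ in the $P$-basis via \eqref{skewP} and absorbing $(z_1\cdots z_n)^m$ via $P_\nu(z;\theta)\mapsto P_{\nu+(m^n)}(z;\theta)$, produces $N_n(e(\lambda);\theta)$ weighted by skew-Jack coefficients $f^{\lambda^\prime}_{\sigma\nu^\prime}(1/\theta)$. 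The main technical obstacle is showing that these combinatorial factors, combined with the conversions $P\leftrightarrow Q$ via \eqref{PQ}--\eqref{bDual} and the duality $f^\lambda_{\mu\nu}(\theta)=f^{\lambda^\prime}_{\mu^\prime\nu^\prime}(1/\theta)b_\lambda(\theta)/(b_\mu(\theta)b_\nu(\theta))$, collapse to the compact ratio $b_{e(\lambda)}(\theta)b_{s(\lambda)}(1/\theta)/b_\lambda(\theta)$ stated in \eqref{qnorm2}.
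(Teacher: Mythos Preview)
Your argument is essentially the paper's. For orthogonality you invoke exactly the same eigenvalue-separation mechanism. For the norms, where the paper fixes $\xi$ and sends $\xi'\to\infty$ (using Lemma~\ref{Lemma:indep}) to isolate the leading contribution, you instead expand $\prod_{j,k}(1-z_j/w_k)^{-2}$ as a convergent series and use the scale-invariance degree constraint; these two devices are equivalent and single out the same data $(\sigma,\tau,a)=\big((\lambda'_1,\ldots,\lambda'_m),\,s(\lambda),\,0\big)$, yielding the same factorised $z$- and $w$-integrals and the same vanishing when $(m^n)\not\subseteq\lambda$.

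There is, however, one genuine gap. The $w$-integral is indeed immediate from $(w_1\cdots w_m)^{-n}P_\sigma=P_{s(\lambda)}$ together with \eqref{Porth}, but the $z$-integral is \emph{not} parallel to this. After expanding both $P_{\lambda/\sigma'}(z)$ and $P_{\lambda/\tau'}(z^{-1})$ in the Jack basis and applying \eqref{Porth}, you are left with a sum over all $\nu$ (of length $\leq n$) such that both $f^{\lambda'}_{\sigma\nu'}(1/\theta)$ and $f^{\lambda'}_{\tau,(\nu+(m^n))'}(1/\theta)$ are nonzero; nothing in your homogeneity argument forces $\nu=e(\lambda)$. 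The paper supplies the missing step: the support constraints $\sigma\cup\nu'\leq\lambda'$ and $\tau+\big(\nu+(m^n)\big)'\geq\lambda'$ from property~(3) of the $f^\lambda_{\mu\nu}$ give a dominance-order sandwich whose unique solution is $\nu=e(\lambda)$, with coefficients $f^{\lambda'}_{\sigma,\,e(\lambda)'}(1/\theta)=b_{\lambda-e(\lambda)}(\theta)b_{e(\lambda)}(\theta)/b_\lambda(\theta)$ and $f^{\lambda'}_{s(\lambda),\,(e(\lambda)+(m^n))'}(1/\theta)=1$. Once this uniqueness is in hand, the ``collapse'' you describe is a one-line identity, namely $b_\sigma(1/\theta)=b_{(\lambda-e(\lambda))'}(1/\theta)=1/b_{\lambda-e(\lambda)}(\theta)$ via \eqref{bDual}. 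What your proposal is actually missing is not the bookkeeping of $b$-factors but the argument that only a single $\nu$ contributes.
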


\begin{proof}
Specialising \eqref{sym} to $p=SP_\lambda$ and $q=SP_\mu$, we get from \eqref{PDEs} that
$$
\big[\cZ_r(\nu_\lambda-\rho_\theta) - \cZ_r(\nu_\mu-\rho_\theta)\big]\langle SP_\lambda,SP_\mu\rangle^\prime_{n,m,\theta} = 0.
$$
The orthogonality relations \eqref{orth} are now a direct consequence of \eqref{cZEq}

Exploiting Lemma \ref{Lemma:indep}, we shall compute the norms
\begin{multline}\label{inProdExpr}
N_{n,m}(\lambda;\theta)\\
= \frac{1}{n!m!}\int_{T_\xi^n}d\measure_n(z) \int_{T_{\xi^\prime}^m}d\measure_m(w)\, \De_{n,m}(z,w;\theta)SP_\lambda(z,w;\theta)SP_\lambda(z^{-1},w^{-1};\theta)
\end{multline}
by taking $\xi^\prime\to+\infty$ while keeping $\xi$ fixed. Rewriting $\Delta_{n,m}$ \eqref{Delnm} according to
$$
\Delta_{n,m}(z,w;\theta) = (-1)^{nm}\frac{(z_1\cdots z_n)^m}{(w_1\cdots w_m)^n}\Delta_n(z;\theta)\Delta_m(w;1/\theta)\prod_{j=1}^n\prod_{k=1}^m (1-z_j/w_k)^{-2},
$$
it is clear from the elementary estimate
$$
(1-t)^{-2} = 1+O(t),\ \ \ t\to 0,
$$
that, for $(z,w)\in T^n_\xi\times T^m_{\xi^\prime}$, 
\begin{multline}\label{DelBd}
\Delta_{n,m}(z,w;\theta)\\
= (-1)^{nm}\frac{(z_1\cdots z_n)^m}{(w_1\cdots w_m)^n}\Delta_n(z;\theta)\Delta_m(w;1/\theta)\big(1+O((1/\xi^\prime)^{nm})\big),\ \ \ \xi^\prime\to+\infty. 
\end{multline}
Furthermore, since $Q_\mu$ is homogenous of degree $|\mu|$, Lemma~\ref{Lemma:SPExp} entails
\begin{multline}\label{SPprodBd}
SP_\lambda(z,w;\theta)SP_\lambda(z^{-1},w^{-1};\theta)\\
= (-1)^{|s((\lambda^\prime+(n^m))^\prime)|+|s(\lambda)|}P_{\lambda/(s((\lambda^\prime+(n^m))^\prime)^\prime}(z;\theta)P_{\lambda/s(\lambda)^\prime}(z^{-1};\theta)\\
\cdot Q_{s((\lambda^\prime+(n^m))^\prime)}(w;1/\theta)Q_{s(\lambda)}(w^{-1};1/\theta) + O\big((\xi^\prime)^{|s((\lambda^\prime+(n^m))^\prime)|-|s(\lambda)|-1}\big),\ \ \ \xi^\prime\to+\infty
\end{multline}
when $(z,w)\in T^n_\xi\times T^m_{\xi^\prime}$. At this point, we recall the orthogonality relations \eqref{Porth} for the Jack polynomials and the scale invariance \eqref{scaleInv} of $\Delta_n$. Combining the bounds \eqref{DelBd}--\eqref{SPprodBd} with the homogeneity of $P_{\lambda/\mu^\prime}$ and $Q_\mu$, we thus infer 
\begin{multline*}
N_{n,m}(\lambda;\theta) = (-1)^{|s((\lambda^\prime+(n^m))^\prime)|+|s(\lambda)|+nm}\left(\frac{\xi}{\xi^\prime}\right)^{nm-|s((\lambda^\prime+(n^m))^\prime)|+|s(\lambda)|}\\
\cdot \big\langle(z_1\cdots z_n)^mP_{\lambda/s((\lambda^\prime+(n^m))^\prime)^\prime},P_{\lambda/s(\lambda)^\prime}\big\rangle^\prime_{n,\theta}\cdot \big\langle Q_{s((\lambda^\prime+(n^m))^\prime)},(w_1\cdots w_m)^nQ_{s(\lambda)}\big\rangle^\prime_{m,1/\theta}\\
+ O\big((\xi^\prime)^{|s((\lambda^\prime+(n^m))^\prime)|-|s(\lambda)|-nm-1}\big),\ \ \ \xi^\prime\to+\infty.
\end{multline*}
We observe
$$
|s((\lambda^\prime+(n^m))^\prime)|-|s(\lambda)| \left\{\begin{array}{ll} = nm, & \mathrm{if}~(m^n)\subseteq\lambda \\ <nm, & \mathrm{otherwise}\end{array}\right.
$$
Taking the limit $\xi^\prime\to+\infty$, it follows that $N_{n,m}(\lambda;\theta)=0$ whenever $(m^n)\not\subseteq\lambda$.
To compute the remaining norms, we note
$$
(w_1\cdots w_m)^nQ_{s(\lambda)}(w;1/\theta) = b_{s(\lambda)}(1/\theta)P_{s(\lambda)+(n^m)}(w;1/\theta)
$$
and
$$
N_m(s(\lambda)+(n^m);1/\theta)=N_m(s(\lambda);1/\theta).
$$
For $(m^n)\subseteq\lambda$, 
it follows that
\begin{equation}\label{SPprodExp}
\begin{split}
N_{n,m}(\lambda;\theta) &= b_{s(\lambda)}(1/\theta)b_{s(\lambda)+(m^n)}(1/\theta)N_m(s(\lambda);1/\theta)\\
&\quad \cdot \big\langle(z_1\cdots z_n)^mP_{\lambda/s((\lambda^\prime+(n^m))^\prime)^\prime},P_{\lambda/s(\lambda)^\prime}\big\rangle^{\prime}_{n,\theta}.
\end{split}
\end{equation}
The definition \eqref{skewP} entails
\begin{equation}\label{skewPexp1}
(z_1\cdots z_n)^m P_{\lambda/s((\lambda^\prime+(n^m))^\prime)^\prime}(z;\theta) = \sum_\nu f^{\lambda^\prime}_{s((\lambda^\prime+(n^m))^\prime)^\prime\nu^\prime}(1/\theta)P_{\nu+(m^n)}(z;\theta).
\end{equation}
We note that the maximal term (with respect to the dominance order) in the right-hand side is given by
$$
\nu = e(\lambda)\Leftrightarrow s((\lambda^\prime+(n^m))^\prime)\cup\nu^\prime = \lambda^\prime.
$$
Indeed, $f^{\lambda^\prime}_{\lambda^*\nu^\prime}=0$ unless $\lambda^*\cup\nu^\prime\leq\lambda^\prime$ and
\begin{equation*}
\begin{split}
f^{\lambda^\prime}_{s((\lambda^\prime+(n^m))^\prime),e(\lambda)^\prime}(1/\theta) &= f^\lambda_{\lambda-e(\lambda),e(\lambda)}(\theta) \frac{b_{\lambda-e(\lambda)}(\theta)b_{e(\lambda)}(\theta)}{b_\lambda(\theta)}\\
&= \frac{b_{\lambda-e(\lambda)}(\theta)b_{e(\lambda)}(\theta)}{b_\lambda(\theta)}\neq 0.
\end{split}
\end{equation*}
On the other hand, since $f^{\lambda^\prime}_{s(\lambda)\nu^\prime}=0$ unless $s(\lambda)+\nu^\prime\geq\lambda^\prime$ and
$$
f^{\lambda^\prime}_{s(\lambda),(e(\lambda)+(n^m))^\prime} = f^{\lambda^\prime}_{s(\lambda),\lambda^\prime-s(\lambda)} = 1,
$$
the minimal term in the right-hand side of
\begin{equation}\label{skewPexp2}
P_{\lambda/s(\lambda)^\prime}(z^{-1};\theta) = \sum_\nu f^{\lambda^\prime}_{s(\lambda),\nu^\prime}(1/\theta)P_{\nu}(z^{-1};\theta)
\end{equation}
corresponds to
$$
\nu = e(\lambda)+(n^m)\Leftrightarrow s(\lambda)+\nu^\prime = \lambda^\prime.
$$
The upshot is that when we substitute the expansions \eqref{skewPexp1} and \eqref{skewPexp2} into the right-hand side of \eqref{SPprodExp}, the orthogonality relations \eqref{Porth} for the Jack polynomials $P_\lambda$ imply that a single term yields a non-zero contribution. More precisely, we get
\begin{equation*}
\begin{split}
N_{n,m}(\lambda;\theta) &= b_{s(\lambda)}(1/\theta)b_{s(\lambda)+(m^n)}(1/\theta)N_{m}(s(\lambda);1/\theta)\\
&\quad \cdot \frac{b_{\lambda-e(\lambda)}(\theta)b_{e(\lambda)}(\theta)}{b_\lambda(\theta)}N_n(e(\lambda);\theta).
\end{split}
\end{equation*}
Using
$$
b_{s(\lambda)+(m^n)}(1/\theta) = b_{(\lambda-e(\lambda))^\prime}(1/\theta) = 1/b_{\lambda-e(\lambda)}(\theta),
$$
we arrive at \eqref{qnorm2}. This concludes the proof of the theorem.
\end{proof}

We note that all factors in the right-hand side of \eqref{qnorm2} are manifestly positive (c.f.~\eqref{Qb} and \eqref{Nlam}), and therefore
\begin{equation}\label{pos}
N_{n,m}(\lambda;\theta) > 0\ \ \mathrm{if}\ (m^n)\subseteq\lambda.
\end{equation}
Furthermore, \eqref{es} and \eqref{bDual} entail the duality property
$$
N_{n,m}(\lambda;\theta) = b_{\lambda^\prime}(1/\theta)^2 N_{m,n}(\lambda^\prime;1/\theta),
$$
which also can be directly inferred from \eqref{dualRel} and
$$
\Delta_{n,m}(z,w;\theta) = \Delta_{m,n}(w,z;1/\theta).
$$

Since the super-Jack polynomials $SP_\lambda(z,w;\theta)$ ($\lambda\in H_{n,m}$) span $\Lambda_{n,m,\theta}$ and their (quadratic) norms $N_{n,m}(\lambda;\theta)$ are real, the next result is immediate from Theorem \ref{Thm:orth}.

\begin{corollary}\label{Cor:Hermitian}
Letting $\theta>0$ and $p,q\in\Lambda_{n,m,\theta}$, we have
$$
\langle p,q\rangle^\prime_{n,m,\theta} = \overline{\langle q,p\rangle^\prime_{n,m,\theta}}.
$$
\end{corollary}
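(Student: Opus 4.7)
The plan is to use the expansion in the super-Jack basis and reduce the Hermitian property to the reality of the norms $N_{n,m}(\lambda;\theta)$.

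Specifically, by Sergeev-Veselov \cite{SV05} (as recalled in Section \ref{sec2.2}), the set $\{SP_\lambda(z,w;\theta) : \lambda \in H_{n,m}\}$ is a $\C$-linear basis of $\Lambda_{n,m,\theta}$. So for any $p,q\in\Lambda_{n,m,\theta}$ I would write finite expansions
\[
p = \sum_{\lambda\in H_{n,m}} a_\lambda\, SP_\lambda(z,w;\theta),\qquad q = \sum_{\mu\in H_{n,m}} b_\mu\, SP_\mu(z,w;\theta),
\]
with uniquely determined coefficients $a_\lambda, b_\mu\in\C$ (only finitely many nonzero, since $p,q$ are polynomials of bounded degree).

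Next, using that $\langle\cdot,\cdot\rangle_{n,m,\theta}^\prime$ is sesquilinear (linear in the first slot, conjugate-linear in the second, as is clear from Definition \ref{Def:prod} and the definition \eqref{conj} of $f^*$), together with the orthogonality relations \eqref{orth}--\eqref{qnorm2} from Theorem \ref{Thm:orth}, I would compute
\[
\langle p,q\rangle_{n,m,\theta}^\prime = \sum_{\lambda,\mu} a_\lambda\,\overline{b_\mu}\,\langle SP_\lambda,SP_\mu\rangle_{n,m,\theta}^\prime = \sum_{\lambda\in H_{n,m}} a_\lambda\,\overline{b_\lambda}\,N_{n,m}(\lambda;\theta),
\]
and analogously
\[
\langle q,p\rangle_{n,m,\theta}^\prime = \sum_{\lambda\in H_{n,m}} b_\lambda\,\overline{a_\lambda}\,N_{n,m}(\lambda;\theta).
\]

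The conclusion then follows by taking the complex conjugate of the second expression and invoking $\overline{N_{n,m}(\lambda;\theta)} = N_{n,m}(\lambda;\theta)$, which is immediate from the explicit formulas \eqref{qnorm1}--\eqref{qnorm2} (the factors $N_n(e(\lambda);\theta)$, $N_m(s(\lambda);1/\theta)$, $b_{e(\lambda)}(\theta)$, $b_{s(\lambda)}(1/\theta)$ and $b_\lambda(\theta)$ are all real for $\theta>0$, cf.~\eqref{Qb} and \eqref{Nlam}). There is no real obstacle: the entire content of the corollary is packaged in Theorem \ref{Thm:orth} together with the basis property, and the proof amounts to a short bookkeeping argument.
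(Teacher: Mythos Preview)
Your argument is correct and matches the paper's own reasoning exactly: the paper simply remarks that since the $SP_\lambda$ span $\Lambda_{n,m,\theta}$ and their norms $N_{n,m}(\lambda;\theta)$ are real, the Hermitian property is immediate from Theorem~\ref{Thm:orth}. You have spelled out precisely this one-line argument in detail.
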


In other words, the sesquilinear product $\langle\cdot,\cdot\rangle^\prime_{n,m,\theta}$ is Hermitian.

As a simple consequence, we can infer the following strengthened version of Lemma \ref{Lemma:indep}.

\begin{corollary}\label{Cor:indep}
Given $\theta>0$ and any $p,q\in\Lambda_{n,m,\theta}$, the value of $\langle p,q\rangle^\prime_{n,m,\theta}$, as defined by \eqref{HermProd}, is independent of $\xi,\xi^\prime>0$ as long as $\xi^\prime\neq \xi$.
\end{corollary}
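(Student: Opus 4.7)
The plan is to deduce the stronger independence by combining the Hermitian property established in Corollary~\ref{Cor:Hermitian} with a substitution argument on the tori. For $p,q\in\Lambda_{n,m,\theta}$ and any $\xi,\xi'>0$ with $\xi\neq\xi'$, I let $K(\xi,\xi';p,q)$ denote the integral on the right-hand side of~\eqref{HermProd}; it is well-defined since the poles $z_j=w_k$ cannot meet the torus $T^n_\xi\times T^m_{\xi'}$. The key identity to establish is
\begin{equation*}
\overline{K(\xi,\xi';q,p)} \;=\; K(1/\xi,1/\xi';p,q),\qquad \xi,\xi'>0,\ \xi\neq\xi'.
\end{equation*}

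To prove it, I would complex-conjugate the integrand of $K(\xi,\xi';q,p)$ (legitimate because $d\measure_n(z)\,d\measure_m(w)$ is real on the torus) and then change variables via $u=z/\xi^2$, $v=w/(\xi')^2$, which is a measure-preserving diffeomorphism $T^n_\xi\times T^m_{\xi'}\to T^n_{1/\xi}\times T^m_{1/\xi'}$ coinciding with $u=\bar z^{-1}$, $v=\bar w^{-1}$ on the tori. Since $\Delta_n$ and $\Delta_m$ are real and scale-invariant on their respective tori, one has $\overline{\Delta_n(z;\theta)}\big|_{z=\xi^2 u}=\Delta_n(u;\theta)$ and likewise for $\Delta_m$; a short direct computation using $\bar z_j=\xi^2/z_j$ on $T^n_\xi$ and the analogous relation for $\bar w_k$ yields
\[
\overline{(1-z_j/w_k)(1-w_k/z_j)}\,\big|_{z=\xi^2 u,\,w=(\xi')^2 v} \;=\; (1-u_j/v_k)(1-v_k/u_j),
\]
whence $\overline{\Delta_{n,m}(z,w;\theta)}\big|_{z=\xi^2 u,\,w=(\xi')^2 v}=\Delta_{n,m}(u,v;\theta)$. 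For the polynomial factors, the monomial expansion of $q$ together with $\bar z_j=\xi^2/z_j$ gives $\overline{q(z,w)}\big|_{z=\xi^2 u,\,w=(\xi')^2 v}=q^*(u,v)$, while the defining relation~\eqref{conj} immediately yields $\overline{p^*(z,w)}\big|_{z=\xi^2 u,\,w=(\xi')^2 v}=p(u,v)$. Assembling these ingredients gives the key identity.

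With that identity in hand, specializing to $\xi'>\xi$ makes the left-hand side equal to $\overline{\langle q,p\rangle^\prime_{n,m,\theta}}=\langle p,q\rangle^\prime_{n,m,\theta}$ by Corollary~\ref{Cor:Hermitian}, while the right-hand side is $K(1/\xi,1/\xi';p,q)$ with $1/\xi>1/\xi'$. Since the map $(\xi,\xi')\mapsto(1/\xi,1/\xi')$ bijects the open half-plane $\{\xi'>\xi\}$ onto $\{\eta>\eta'\}$ in $(0,\infty)^2$, one concludes that $K(\eta,\eta';p,q)=\langle p,q\rangle^\prime_{n,m,\theta}$ for every pair $\eta>\eta'>0$; combined with Lemma~\ref{Lemma:indep} this yields the stated independence whenever $\xi\neq\xi'$. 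I do not anticipate a serious obstacle: the crux is simply recognizing that on $T^n_\xi$ the composition of complex conjugation with the inversion $z\mapsto\bar z^{-1}$ is nothing but the holomorphic scaling $z\mapsto z/\xi^2$, after which the scale-invariance~\eqref{scaleInv} and the definition of $*$ make every factor transform transparently.
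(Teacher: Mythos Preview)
Your proposal is correct and follows essentially the same route as the paper: both arguments combine Corollary~\ref{Cor:Hermitian} with a change of variables showing $\overline{K(\xi,\xi';q,p)}=K(1/\xi,1/\xi';p,q)$, so that the region $\xi'>\xi$ is carried bijectively to $\xi'<\xi$. The only cosmetic difference is that the paper writes the substitution as $z\mapsto z^{-1}$, $w\mapsto w^{-1}$ directly, whereas you phrase it as complex conjugation followed by the scaling $z\mapsto z/\xi^2$ (which coincides with $z\mapsto \bar z^{-1}$ on $T^n_\xi$); these are the same map.
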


\begin{proof}
We shall use a superscript $(\xi,\xi^\prime)$ to keep track of the choice of $\xi,\xi^\prime$ in Definition \ref{Def:prod}. Taking $z\to z^{-1}$ and $w\to w^{-1}$ in \eqref{HermProd} and observing
$$
\Delta_{n,m}(z^{-1},w^{-1};\theta) = \Delta_{n,m}(z,w;\theta),
$$
$$
d\measure_n(z^{-1}) = (-1)^nd\measure_n(z),\ \ \ d\measure_m(w^{-1}) = (-1)^md\measure_m(w),
$$
we deduce
$$
\langle p,q\rangle^{\prime,(\xi,\xi^\prime)}_{n,m,\theta} = \overline{\langle q,p\rangle^{\prime,(\xi,\xi^\prime)}_{n,m,\theta}} = \langle p,q\rangle^{\prime,(1/\xi,1/\xi^\prime)}_{n,m,\theta}.
$$
Since $\xi^\prime>\xi>0$ if and only if $1/\xi>1/\xi^\prime>0$, the claim is now evident from Lemma \ref{Lemma:indep}.
\end{proof}

From Theorem \ref{Thm:orth}, we see that
$$
\ker\, \langle\cdot,\cdot\rangle^\prime_{n,m,\theta} = \spn\{SP_\lambda(z,w;\theta)  : (m^n)\not\subseteq\lambda 
\},
$$
and when we combine this with the positivity of $N_{n,m}(\lambda;\theta)$ for $(m^n)\subseteq\lambda$ 
 we arrive at the following corollary.

\begin{corollary}\label{Cor:innerProd}
The Hermitian product $\langle\cdot,\cdot\rangle^\prime_{n,m,\theta}$ descends to a positive definite inner product on the factor space
$$
V_{n,m,\theta}:= \Lambda_{n,m,\theta}\big/\spn\{SP_\lambda(z,w;\theta)  : (m^n)\not\subseteq\lambda \},
$$
and the renormalised super-Jack polynomials
$$
\widetilde{SP}_\lambda(z,w;\theta):= N_{n,m}(\lambda;\theta)^{-1/2}SP_\lambda(z,w;\theta),\ \ \ 
(m^n)\subseteq \lambda, 
$$
where $N_{n,m}(\lambda;\theta)$ is given by \eqref{qnorm2}, yield an orthonormal basis in the resulting inner product space.
\end{corollary}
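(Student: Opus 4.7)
The plan is to deduce the corollary directly from Theorem \ref{Thm:orth}, Corollary \ref{Cor:Hermitian}, and Sergeev and Veselov's result that $\{SP_\lambda(z,w;\theta)\}_{\lambda\in H_{n,m}}$ is a linear basis of $\Lambda_{n,m,\theta}$. First I would identify the kernel of the sesquilinear product. Expanding an arbitrary $p\in\Lambda_{n,m,\theta}$ uniquely as $p = \sum_{\lambda\in H_{n,m}} c_\lambda SP_\lambda(z,w;\theta)$ and invoking the orthogonality relations \eqref{orth}, one gets $\langle p,SP_\mu\rangle^\prime_{n,m,\theta} = c_\mu N_{n,m}(\mu;\theta)$ for every $\mu\in H_{n,m}$. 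By \eqref{qnorm1} the terms with $(m^n)\not\subseteq\mu$ contribute nothing, while by \eqref{pos} every term with $(m^n)\subseteq\mu$ contributes a strictly positive multiple of $c_\mu$. Hence $p$ pairs trivially with all $SP_\mu$ if and only if $c_\mu = 0$ for every $\mu$ with $(m^n)\subseteq\mu$, which is exactly the assertion that $p\in K:=\spn\{SP_\lambda(z,w;\theta) : (m^n)\not\subseteq\lambda\}$. Since the $SP_\mu$ span $\Lambda_{n,m,\theta}$, this identifies $K$ as the one-sided kernel, and Corollary \ref{Cor:Hermitian} promotes $K$ to a two-sided kernel, so that the product factors canonically through $V_{n,m,\theta}\times V_{n,m,\theta}$.

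Second, I would verify positive definiteness of the induced form. The cosets $[SP_\lambda]$ with $(m^n)\subseteq\lambda$ span $V_{n,m,\theta}$, since the remaining $SP_\lambda$ vanish modulo $K$, and they are linearly independent in the quotient because the full family $\{SP_\lambda\}_{\lambda\in H_{n,m}}$ is linearly independent in $\Lambda_{n,m,\theta}$. Any nonzero $[p]\in V_{n,m,\theta}$ therefore admits a representation $[p] = \sum_{(m^n)\subseteq\lambda} c_\lambda [SP_\lambda]$ with at least one $c_\lambda\neq 0$, and then \eqref{orth} combined with \eqref{pos} yields
$$
\langle [p],[p]\rangle^\prime_{n,m,\theta} = \sum_{(m^n)\subseteq\lambda}|c_\lambda|^2 N_{n,m}(\lambda;\theta) > 0.
$$

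Finally, the renormalisations $\widetilde{SP}_\lambda = N_{n,m}(\lambda;\theta)^{-1/2}SP_\lambda$ are well-defined precisely because $N_{n,m}(\lambda;\theta)>0$ whenever $(m^n)\subseteq\lambda$, and \eqref{orth} immediately gives $\langle \widetilde{SP}_\lambda,\widetilde{SP}_\mu\rangle^\prime_{n,m,\theta}=\delta_{\lambda\mu}$; together with the spanning and independence established above, this shows that the corresponding cosets form an orthonormal basis of $V_{n,m,\theta}$. I do not expect any substantive obstacle here, as all the technical content is already contained in Theorem \ref{Thm:orth}; the only point needing mild attention is the appeal to Corollary \ref{Cor:Hermitian} to upgrade the one-sided kernel computation to a genuine two-sided kernel, which is what makes the descent to the quotient canonical.
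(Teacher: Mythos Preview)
Your proposal is correct and follows essentially the same approach as the paper, which simply observes (just before stating the corollary) that Theorem~\ref{Thm:orth} identifies $\ker\langle\cdot,\cdot\rangle^\prime_{n,m,\theta}$ with $\spn\{SP_\lambda:(m^n)\not\subseteq\lambda\}$ and then invokes the positivity \eqref{pos}. Your write-up merely makes explicit the routine details (the kernel computation via the basis expansion, the use of Corollary~\ref{Cor:Hermitian} for two-sidedness, and the orthonormality check) that the paper leaves implicit.
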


\section{A Hilbert space interpretation of deformed CMS operators}\label{sec4}
We proceed to establish a Hilbert space interpretation of the deformed CMS operator \eqref{dCMSOp} as well as all other quantum integrals. 

The results of Section \ref{sec3} suggests a natural regularisation of the ill-defined $L^2$-products of the (formal) eigenfunctions $\Psi_\lambda$ \eqref{Psi} of the deformed CMS operator $\cH_{n,m,\theta}$ \eqref{dCMSOp}. More specifically, let us equip the linear space
\begin{equation*}
\mathcal{U}_{n,m,\theta}:= \big\lbrace \Psi_0(x,y;\theta)p(z(x),w(y)) : p\in\Lambda_{n,m,\theta}\big\rbrace
\end{equation*}
with the sesquilinear product given by
\begin{equation*}
(\psi,\phi)_{n,m,\theta} = \int_{([0,2\pi]+i\epsilon)^n}dx \int_{([0,2\pi]+i\epsilon^\prime)^m}dy\, \psi(x,y)\overline{\phi}(x,y),
\end{equation*}
where $\overline{\phi}(x,y) = \overline{\phi(\bar{x},\bar{y})}$ and the regularisation parameters $\epsilon,\epsilon^\prime\in\mathbb{R}$ should be chosen such that $\epsilon\neq\epsilon^\prime$. 

We note that, for $m=0$ and $\epsilon=0$, the resulting inner product above is identical to the inner product used by physicists in the application of the CMS model in quantum mechanics. Moreover, introducing polynomials $p,q\in\Lambda_{n,m,\theta}$ by requiring
\begin{equation*}
p(z(x),w(y)) = \frac{\psi(x,y)}{\Psi_0(x,y)},\ \ \ q(z(x),w(y)) = \frac{\phi(x,y)}{\Psi_0(x,y)},
\end{equation*}
it is readily verified that
\begin{equation*}
(\psi,\phi)_{n,m,\theta} = c_{n,m,\theta}\langle p,q\rangle_{n,m,\theta}^\prime,\ \ \ c_{n,m,\theta} = (2\pi)^{n+m}n!m!\Big/2^{\theta n(n-1)+m(m-1)/\theta-2nm}.
\end{equation*}
Note that $\xi=e^{-\epsilon}$ and $\xi^\prime=e^{-\epsilon^\prime}$, so that both products are well-defined as long as $\epsilon\neq\epsilon^\prime$. Hence, by Corollaries \ref{Cor:Hermitian}--\ref{Cor:innerProd}, the sesquilinear product $(\cdot,\cdot)_{n,m,\theta}$ is Hermitian, independent of the values of the regularisation parameters $\epsilon,\epsilon^\prime$, and descends to a positive definite inner product on the factor space
\begin{equation*}
\mathcal{V}_{n,m,\theta}:= \mathcal{U}_{n,m,\theta}\big/\spn\{\Psi_\lambda(x,y;\theta)  : (m^n)\not\subseteq\lambda \}.
\end{equation*}
Consequently, we can complete $\mathcal{V}_{n,m,\theta}$ to obtain a Hilbert space $\widetilde{\mathcal{V}}_{n,m,\theta}$, and the eigenfunctions $\Psi_\lambda$, with $\lambda\in H_{n,m}$ and $(m^n)\subseteq\lambda$, yield an orthogonal basis thereof. Since the corresponding eigenvalues are real-valued, we see that the deformed CMS operator $\cH_{n,m,\theta}$ induces an (essentially) self-adjoint operator in $\widetilde{\mathcal{V}}_{n,m,\theta}$. By the same reasoning, this result extends to all quantum integrals
$$
\mathcal{H}_{n,m,\theta}^{(r)}:= -\Psi_0\circ\mathcal{L}_{n,m,\theta}^{(r)}\circ\Psi_0^{-1},\ \ \ r\geq 1,
$$
c.f.~\eqref{cLr}.

\section{Conclusions and outlook}\label{sec5}
In the standard quantum mechanical interpretation of the usual ($m=0$) trigonometric CMS model, the absolute-square of a wave function has a physical interpretation as a probability amplitude. We thus want to stress that the wavefunctions \eqref{Psi} of the deformed CMS operator have, despite our Hilbert space interpretation, no such probabilistic interpretation \cite{AL17}. This is consistent with recent physics applications of the trigonometric CMS model in the context of the fractional quantum Hall effect; see e.g.\ \cite{vES98,BH08,PH14}. 

It would be interesting to generalize the results in the present paper to the $BC(n,m)$ variant of the deformed trigonometric CMS model \cite{SV04,SV09b}, and to the relativistic deformations of the CMS models in the sense of Ruijsenaars and van Diejen \cite{Rui87,vDi94} which, in the (undeformed) trigonometric case, correspond to MacDonald- and Koornwinder polynomials. Indeed, deformed variants of these models and corresponding (formal) eigenfunctions are known (see e.g.\ \cite{SV04,SV09a,SV09b,HL10,DH12}), but Hilbert space interpretations are missing. The results in the present paper provide some hints on how such interpretations could be established.

We finally note that the orthogonality relations \eqref{Porth} and (quadratic) norms formulae \eqref{Nlam} for the Jack polynomials are an important input in the derivation of integral representations for the Jack polynomials in \cite{AMOS95} (see also \cite{OO97,Saw97,KMS03} for related integral representations). We expect that, in a similar way, our orthogonality results for the super-Jack polynomials will make it possible to derive analogous integral representations for the super-Jack polynomials. We note that some tools needed for such a derivation were conjectured in \cite{AL17}, Section V. It would be interesting to work out these integral representations in detail, but this is beyond the scope of the present paper

\section*{Acknowledgments}
We would like to thank O.A.~Chalykh, M.~Noumi, A.N.~Sergeev and A.P.~Veselov for encouragement and helpful discussions. We gratefully acknowledge partial financial support by the {\em Stiftelse Olle Engkvist Byggm\"astare} (contract 184-0573). E.L.~acknowledges support by VR Grant No.~2016-05167. The work of F.A.~was partially carried out as a \emph{JSPS International Research Fellow}.

\bibliographystyle{amsalpha}

\end{document}